\documentclass[11pt,a4paper]{amsart}
\let\amsmarkboth\markboth    
\usepackage{amsmath,amsfonts,amssymb,amsthm,amscd}
\usepackage[english]{babel}
\usepackage[mathscr]{eucal}
\usepackage{graphicx}
\usepackage{appendix}
\usepackage{color}



\makeatletter
\let\markboth\amsmarkboth
\bbl@redefine\markboth#1#2{%
   \def\bbl@arg{#1}%
   \ifx\bbl@arg\@empty
     \toks@{}%
   \else
     \toks@{\noexpand\foreignlanguage{%
              \languagename}{%
              \noexpand\bbl@restore@actives#1}}%
   \fi
   \def\bbl@arg{#2}%
   \ifx\bbl@arg\@empty
     \toks8{}%
   \else
     \toks8{\noexpand\foreignlanguage{%
              \languagename}{%
              \noexpand\bbl@restore@actives#2}}%
   \fi
   \edef\bbl@tempa{\the\toks@}%
   \edef\bbl@tempb{\the\toks8}%
   \protected@edef\bbl@tempa{%
     \noexpand\org@markboth{\bbl@tempa}{\bbl@tempb}}%
   \bbl@tempa
}
\DeclareRobustCommand*\ams@disablelinebreak{\def\\{ \ignorespaces}}
\def\maketitle{\par
   \@topnum\z@ %
   \@setcopyright
   \thispagestyle{firstpage}%
   \uppercasenonmath\shorttitle
   \ifx\@empty\shortauthors \let\shortauthors\shorttitle
   \else \andify\shortauthors
   \fi
   \@maketitle@hook
   \begingroup
   \@maketitle
   \toks@\@xp{\shortauthors}\@temptokena\@xp{\shorttitle}%
   \protected@edef\@tempa{%
     \@nx\markboth{\ams@disablelinebreak
       \@nx\MakeUppercase{\the\toks@}}{\the\@temptokena}}%
   \@tempa
   \endgroup
   \c@footnote\z@
   \@cleartopmattertags
}

\makeatother


 \setlength{\textwidth}{13.5cm} \setlength{\textheight}{22cm}
 \setlength{\topmargin}{1.5cm}  
\setlength{\oddsidemargin}{1cm} 
 \setlength{\evensidemargin}{1cm}


\numberwithin{equation}{section}





\newcommand{\N}{{\mathbb N}}
\newcommand{\R}{{\mathbb R}}

\newcommand{\RR}{{\mathbb R}^2}







%








\newtheorem{theorem}{Theorem}[section]
\newtheorem{proposition}[theorem]{Proposition}
\newtheorem{lemma}[theorem]{Lemma}

\theoremstyle{definition}
\newtheorem{definition}[theorem]{Definition}

\theoremstyle{remark}



\newcommand{\dt}{\partial_t}

\newcommand{\eps}{\varepsilon}
\newcommand{\fe}{f_\eps}
\newcommand{\loc}{\text{loc}}

\begin{document}


\title[Gyrokinetic limit]{On the gyrokinetic limit for the two-dimensional Vlasov-Poisson system}

\author[Evelyne Miot]{Evelyne Miot}
\address[E. Miot]{CNRS - Institut Fourier, Universit\'e Grenoble-Alpes}  \email{evelyne.miot@univ-grenoble-alpes.fr}


\date{\today}

\begin{abstract}
We investigate the gyrokinetic limit for the two-dimensional Vlasov-Poisson system
in the regime studied by Golse and Saint Raymond \cite{golse-sr} and by Saint-Raymond \cite{SR-02}. We
present another proof of the convergence towards the
Euler equation under several assumptions on the energy and on the $L^\infty$ norms of the initial data.
\end{abstract}

\maketitle

\section{Introduction and main results}
The purpose of this paper is to investigate an asymptotic regime for the following Vlasov-Poisson system as $\eps$ tends to zero:
\begin{equation}
\label{syst:VP}
\begin{cases}
\displaystyle \dt \fe+\frac{v}{\eps}\cdot \nabla_x \fe+\left(\frac{E_\eps}{\eps}+\frac{v^\perp}{\eps^2}\right)\cdot \nabla_v \fe=0,\quad (t,x,v)\in \R_+\times \R^2\times \R^2\\
\displaystyle E_\eps(t,x)=\int_{\R^2}\frac{x-y}{|x-y|^2}\rho_\eps(t,y)\,dy,\quad \displaystyle \rho_\eps(t,x)=\int_{\R^2}\fe(t,x,v)\,dv,\\
\fe(0,x,v)=f_{\eps}^0(x,v).
\end{cases}
\end{equation}   
Here, $f_\eps=f_\eps(t,x,v):\R_+\times \R^2\times \R^2\to \R_+$ stands for the density of a two-dimensional distribution of electric particles, called a plasma. The evolution of the plasma in the plane is submitted to the self-consistent electric field $E_\eps(t,x)$ and to a large external and constant magnetic field, orthogonal to the plane, which is represented by the term $v^\perp=(v_1,v_2)^\perp=(-v_2,v_1)$. The limit $\eps \to 0$ corresponds to the situation where the strength of the magnetic field tends to infinity. In the periodic setting, namely $(x,v)\in \mathbb{T}\times \RR$, the gyrokinetic limit was studied by Golse and Saint-Raymond \cite{golse-sr}, then by Saint-Raymond \cite{SR-02}, and also by Brenier \cite{brenier} in 
a different regime. In particular, Golse and Saint-Raymond proved that 
under suitable bounds on the initial data\footnote{See  \eqref{hyp:ini-0}, \eqref{hyp:ini-1} and \eqref{hyp:ini-2-sr} below.}, 
the sequence of spatial densities $(\rho_\eps)_{\eps >0}$ is relatively compact in\footnote{Here, $\mathcal{M}^+(\RR)$ denotes the space of bounded, positive Radon measures 
on $\RR$.} $L^\infty(\R_+,\mathcal{M}^+(\mathbb{T}\times \RR))$ weakly $\ast$ and that any accumulation point $\rho$  
is a measure-valued solution\footnote{In a sense that will be specified in Definition \ref{def:delort} below.}to the 2D Euler equation for the vorticity:
\begin{equation}
\label{syst:Euler}
\begin{cases}
\displaystyle \partial_t\rho+E^\perp\cdot \nabla \rho=0\\
\displaystyle E^\perp=2\pi \nabla^\perp\Delta^{-1}\rho.
\end{cases}
\end{equation}

\medskip

The main result of this paper will concern initial densities $f_\eps^0$ satisfying the following assumptions:
\begin{equation}\label{hyp:ini-0}
 f_\eps^0\in L^1(\RR)\cap L^\infty(\RR),\quad f_\eps^0\geq 0,\quad f_\eps^0\text{ is compactly supported}.
\end{equation}
Moreover, defining for $f\in L^1$ and $\rho=\int f\,dv$ the energy \begin{equation*}
\mathcal{H}(f)=\frac{1}{2}\iint_{\R^2\times \R^2}|v|^2f(x,v)\,dx\,dv-\frac{1}{2}\iint_{\R^2\times \RR}
\ln|x-y|\rho(x)\rho(y)\,dx\,dy,
\end{equation*}  we will assume that
\begin{equation}\label{hyp:ini-1}\begin{split}
&\sup_{\eps>0} \left( \|f_\eps^0\|_{L^1}+\int_{\RR}|x|^2\rho_\eps^0(x)\,dx \right)<+\infty,\\
&\sup_{\eps>0} \mathcal{H}(f_\eps^0)<+\infty.
\end{split}
\end{equation}Finally,
\begin{equation}
\label{hyp:ini-2}
 \eps^2\Theta\left(  \|f_\eps^0\|_{L^\infty}\right)\to 0,\quad \text{as }\eps\to 0,
\end{equation}
where $\Theta(\tau)=\tau\ln(\tau+2).$

Adapting the classical Cauchy theory for the Vlasov-Poisson equation \cite{Arsenev, loeper, ukai} for any $\eps>0$, one obtains a unique global weak solution 
$f_\eps$ to \eqref{syst:VP} belonging to $L^\infty(\R_+,L^1\cap L^\infty(\RR))$, compactly supported, such that $f_\eps(0)=f_\eps^0$. In particular, the associated spatial density $\rho_\eps$ belongs to 
$L^\infty_{\loc}(\R_+,L^\infty(\RR))$. Finally, the energy and the $L^p$ norms of the solution are non-increasing in time.

Our main result  on the asymptotics of \eqref{syst:VP} can be then stated as follows:
\begin{theorem}\label{thm:main}
Let $f_\eps^0$ satisfy \eqref{hyp:ini-0}, \eqref{hyp:ini-1} and \eqref{hyp:ini-2}. 
Let $f_\eps$ be the corresponding global weak solution to \eqref{syst:VP}.
There exists a subsequence $\eps_n\to 0$ as $n\to +\infty$ such that 
$(\rho_{\eps_n})_{n\in \N}$ converges to $\rho$ in\footnote{Here, $\rho \in C(\R_+,\mathcal{M}^+(\RR)-w^\ast)$ if and only if $\rho(t)\in \mathcal{M}^+(\RR)$ for all $t\in \R_+$ and moreover, $t\mapsto 
\int \phi(x)\,d\rho(t,x)$ is continous, for all $\varphi\in C_c(\RR)$.} $C(\R_+,\mathcal{M}^+(\R^2)-w^\ast)$. Moreover, $\rho$ belongs to $L^\infty(\R_+,H^{-1}(\RR))$ and it is a global 
solution of the 2D Euler equation\eqref{syst:Euler} in the sense of Definition \ref{def:delort}.

\end{theorem}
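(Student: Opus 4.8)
The plan is to combine the a priori bounds provided by the conserved and dissipated quantities of \eqref{syst:VP} with an $\eps$-free weak formulation of the equation — obtained by testing against observables adapted to the fast rotation — and then to pass to the limit in the nonlinear field term by a Delort-type concentration--cancellation argument based on the logarithmic energy. First I would record the a priori estimates. Since the mass, the $L^p$ norms and the energy $\mathcal H$ of $f_\eps$ are non-increasing, assumptions \eqref{hyp:ini-0}--\eqref{hyp:ini-2} yield, uniformly in $\eps>0$ and $t\ge 0$: the mass bound, and, using the magnetic rotation to rule out $\eps^{-1}$ growth, a bound on the second moment $\int_{\R^2}|x|^2\rho_\eps(t)\,dx$; controlling the logarithmic potential from below by the mass and this moment, a uniform kinetic energy bound $\iint|v|^2 f_\eps(t)\,dx\,dv\le C$; and, $\mathcal H$ being non-increasing, the two-sided bound $|\iint\ln|x-y|\,\rho_\eps(t,x)\rho_\eps(t,y)\,dx\,dy|\le C$, that is, $\rho_\eps$ bounded in $L^\infty(\R_+,H^{-1}(\R^2))$. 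Interpolating $\|f_\eps(t)\|_{L^\infty}\le A_\eps:=\|f_\eps^0\|_{L^\infty}$ with the kinetic energy gives $\|\rho_\eps(t)\|_{L^2}\le CA_\eps^{1/2}$, hence, by Hardy--Littlewood--Sobolev, $\int_{\R^2}|E_\eps|^2\rho_\eps\,dx\le CA_\eps$ and therefore $\iint|v|\,|E_\eps|\,f_\eps\,dx\,dv\le CA_\eps^{1/2}$.

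Next I would derive the key identity. Along the characteristics of \eqref{syst:VP} one has $\tfrac{d}{dt}(x+\eps v^\perp)=E_\eps^\perp(t,x)$; equivalently, testing the Vlasov equation against $\phi(t,x+\eps v^\perp)$ for $\phi\in C_c^\infty([0,\infty)\times\R^2)$, the $\eps^{-1}$ and $\eps^{-2}$ contributions cancel and
\begin{equation*}
\frac{d}{dt}\iint\phi(t,x+\eps v^\perp)f_\eps\,dx\,dv=\iint\Big[\partial_t\phi(t,x+\eps v^\perp)+E_\eps^\perp(t,x)\cdot\nabla\phi(t,x+\eps v^\perp)\Big]f_\eps\,dx\,dv.
\end{equation*}
Using $\iint|v|f_\eps\le C$, one replaces $x+\eps v^\perp$ by $x$ in the first two terms at the cost of $O(\eps\,\|\phi\|_{C^1_{t,x}})$, while in the field term the error is bounded by $\eps\,\|D^2\phi\|_{L^\infty}\iint|v|\,|E_\eps|\,f_\eps\,dx\,dv=O(\eps A_\eps^{1/2}\,\|D^2\phi\|_{L^\infty})$, which tends to $0$ thanks to \eqref{hyp:ini-2} (the logarithmic factor in $\Theta$ being there to accommodate the sharp form of these interpolation estimates). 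Finally, by the oddness of the Biot--Savart kernel, $\int_{\R^2}E_\eps^\perp\cdot\nabla\phi\,\rho_\eps\,dx=\tfrac12\iint H_\phi(x,y)\,\rho_\eps(t,x)\rho_\eps(t,y)\,dx\,dy$, where $H_\phi(x,y):=\frac{(x-y)^\perp}{|x-y|^2}\cdot\big(\nabla\phi(x)-\nabla\phi(y)\big)$ is bounded, with $\|H_\phi\|_{L^\infty}\le C(\|\nabla\phi\|_{L^\infty}+\|D^2\phi\|_{L^\infty})$. In particular, for $\phi$ independent of time, $|\tfrac{d}{dt}\int_{\R^2}\phi\,\rho_\eps(t)\,dx|\le C(\phi)$ uniformly in $\eps$, so $t\mapsto\int\phi\,d\rho_\eps(t)$ is equicontinuous.

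I would then conclude by compactness and limit passage. The uniform mass and second-moment bounds ensure that $\{\rho_\eps(t):\eps>0,\ t\ge0\}$ is tight; together with the equicontinuity above, Arzel\`a--Ascoli (after metrising the weak-$\ast$ topology on bounded subsets of $\mathcal M^+(\R^2)$) gives a subsequence with $\rho_{\eps_n}\to\rho$ in $C(\R_+,\mathcal M^+(\R^2)-w^\ast)$, and $\rho\ge0$, $\rho\in L^\infty(\R_+,H^{-1}(\R^2))$ by lower semicontinuity. It remains to pass to the limit in $\iint H_\phi\rho_\eps\rho_\eps$, which is the crux. Since $A_\eps$ may diverge, $\rho_\eps$ need not be bounded in $L^2$ and $E_\eps$ need not be compact in $L^2_{\loc}$, so no weak--strong argument is available; one exploits instead the structure. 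The kernel $H_\phi$ is bounded, continuous off the diagonal, and decays like $|x|^{-1}$ as $|x|\to\infty$ for $y$ in $\supp\phi$, so the tail in $x$ is $O(R^{-1})$; near the diagonal, concentration--cancellation applies: by the logarithmic energy bound, for $0<\delta<1$,
\begin{equation*}
\iint_{|x-y|<\delta}\rho_\eps(t,x)\rho_\eps(t,y)\,dx\,dy\le\frac{1}{\ln(1/\delta)}\iint_{|x-y|<\delta}\ln\frac{1}{|x-y|}\,\rho_\eps(t,x)\rho_\eps(t,y)\,dx\,dy\le\frac{C}{\ln(1/\delta)},
\end{equation*}
uniformly in $\eps$ and $t$, and the same bound holds for $\rho(t)$; on the remaining bounded region $\{\,y\in\supp\phi,\ \delta<|x-y|<R\,\}$ the kernel is continuous and bounded and, by Fubini and uniform convergence on compacts of the inner integral, $\rho_{\eps_n}(t)\otimes\rho_{\eps_n}(t)\rightharpoonup\rho(t)\otimes\rho(t)$ weakly-$\ast$ there. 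Letting $n\to\infty$, then $R\to\infty$ and $\delta\to0$, one obtains $\iint H_\phi\rho_{\eps_n}\rho_{\eps_n}\to\iint H_\phi\,d\rho\,d\rho$ for a.e.\ $t$. Integrating the corrected identity of the previous step in time and using dominated convergence, $\rho$ satisfies, for every $\phi\in C_c^\infty([0,\infty)\times\R^2)$,
\begin{equation*}
\int_{\R^2}\phi(0)\,d\rho(0)+\int_0^\infty\!\!\int_{\R^2}\partial_t\phi\,d\rho\,dt+\frac12\int_0^\infty\!\!\iint H_\phi(x,y)\,d\rho(t,x)\,d\rho(t,y)\,dt=0,
\end{equation*}
where $\rho(0)$ is the weak-$\ast$ limit of the initial data. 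Since $\tfrac12\iint H_\phi\,d\rho\,d\rho$ is, by definition, the rigorous meaning of $\int E^\perp\cdot\nabla\phi\,d\rho$ with $E^\perp=2\pi\nabla^\perp\Delta^{-1}\rho$, this is exactly the weak formulation of \eqref{syst:Euler} in the sense of Definition \ref{def:delort}; the continuity in time of $\rho$ can also be read off this identity.

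The step I expect to be the main obstacle is the limit passage in the quadratic field term: because $A_\eps$ may go to infinity, neither $\rho_\eps$ in $L^2$ nor $E_\eps$ in $L^2_{\loc}$ is compact, so one is forced into the Delort-type concentration--cancellation argument above, which relies essentially on the sign of $\rho_\eps$ and on the logarithmic (i.e.\ $H^{-1}$) energy bound. A secondary, more technical, point is the uniform control of the error terms in the $\eps$-free formulation, for which assumption \eqref{hyp:ini-2} is precisely designed.
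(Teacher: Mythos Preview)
Your approach is essentially the same as the paper's: filter out the fast rotation by working with the gyro-variable $x+\eps v^\perp$ (the paper does this via $Z_\eps=X_\eps+\eps V_\eps^\perp$ along characteristics, and, in an alternative proof, via the shifted density $f_\eps(t,x-\eps v^\perp,v)$), derive an $\eps$-free weak formulation for $\rho_\eps$ with remainders, obtain time-equicontinuity and extract a converging subsequence, and pass to the limit in the quadratic term $\iint H_\Phi\,\rho_\eps\rho_\eps$ by the Delort--Schochet concentration--cancellation argument based on the uniform $H^{-1}$ bound. So the global architecture coincides.

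The one place where you genuinely differ is the control of the field remainder $\iint|v|\,|E_\eps|\,f_\eps$. You estimate it directly by Cauchy--Schwarz and Hardy--Littlewood--Sobolev: $\|E_\eps\|_{L^4}\le C\|\rho_\eps\|_{L^{4/3}}\le C A_\eps^{1/4}$, hence $\int|E_\eps|^2\rho_\eps\le\|E_\eps\|_{L^4}^2\|\rho_\eps\|_{L^2}\le CA_\eps$, giving the remainder bound $O(\eps A_\eps^{1/2})$. The paper instead splits $E_\eps=(E_\eps-\tilde E_\eps)+\tilde E_\eps$, uses the Sobolev embedding $H^1(\R^2)\hookrightarrow L^q(\R^2)$ with its explicit $\sqrt{q}$ constant and an interpolation of $J_\eps$ between $L^1$ and $L^{4/3}$, and optimizes in $q\sim\ln A_\eps$, which produces the bound $O\big(\eps\,(A_\eps\ln A_\eps)^{1/2}\big)$. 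Your route is shorter and actually sharper: it needs only $\eps^2 A_\eps\to 0$, which is implied by (but weaker than) assumption \eqref{hyp:ini-2}; your parenthetical remark about the logarithm in $\Theta$ ``accommodating the sharp form'' is therefore unnecessary---in your argument the logarithm never appears.

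One small imprecision: you write $|\tfrac{d}{dt}\int\phi\,\rho_\eps|\le C(\phi)$, but what you actually control is $\tfrac{d}{dt}\iint\phi(x+\eps v^\perp)f_\eps$, and passing from this to $\int\phi\,\rho_\eps$ introduces an additive $O(\eps)$ at the endpoints that is not differentiable in a controlled way. The correct statement is the integrated one, $\big|\int\phi\,\rho_\eps(t)-\int\phi\,\rho_\eps(s)\big|\le C|t-s|+r_\eps$ with $r_\eps\to 0$; the paper states exactly this (its Lemma~2.10) and then invokes a trivially modified Arzel\`a--Ascoli (its Lemma~2.11) allowing such an $\eps$-dependent defect. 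With that adjustment your compactness step goes through.
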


Theorem \ref{thm:main} is a slight improvement of the convergence result in \cite{SR-02}, which handles initial densities satisfying \eqref{hyp:ini-0}, \eqref{hyp:ini-1} and \eqref{hyp:ini-2-sr}:
\begin{equation}
\label{hyp:ini-2-sr}
 \eps  \|f_\eps^0\|_{L^\infty}\to 0,\quad \text{as }\eps\to 0.
\end{equation}Typically, the assumption \eqref{hyp:ini-2}  allows for initial data such that for some $\beta>1$  \begin{equation*}
\sup_{\eps>0}  \eps^2 |\ln \eps|^\beta  \|f_\eps^0\|_{L^\infty}<+\infty.
\end{equation*}
Thus, Theorem \ref{thm:main} includes initial data that converge to monokinetic data:
\begin{equation*}
\fe^0(x,v)=\rho_0(x)\frac{1}{\eta_\eps^2}F\left(\frac{v-u_\eps(x)}{\eta_\eps}\right),
\end{equation*}
where for instance $u_\eps \in L^2(\R^2)$, $\rho_0\in L^\infty(\R^2)$, $F\in L^1\cap L^\infty(\R^2)$, and $\eps^2\Theta(\eta_\eps^{-2})$ vanishes as $\eps \to 0$.

\medskip

In the case where \eqref{hyp:ini-2} is replaced by the uniform bound
\begin{equation}
\label{hyp:ini-3}
\sup_{\eps>0}   \|f_\eps^0\|_{L^\infty}<+\infty,
\end{equation}
any accumulation point is a true solution of the 2D Euler equation:
\begin{theorem}\label{thm:main-2}
Let $f_\eps^0$ satisfy \eqref{hyp:ini-0}, \eqref{hyp:ini-1} and \eqref{hyp:ini-3}. 
Let $f_\eps$  be the corresponding global weak solution 
to \eqref{syst:VP}.
There exists a subsequence $\eps_n\to 0$ as $n\to +\infty$ such that $(\rho_{\eps_n})_{n\in \N}$ converges to $\rho$ in $C(\R_+,L^2(\R^2)-w)$. Moreover,

\begin{enumerate}

\item $\rho\in L^\infty (\R_+,L^2(\R^2))$;

\item $(E_{\eps_n})_{n\in \N}$ converges to some $E$  in $C(\R_+,L^2_\loc(\R^2))$;

\item For all $t\in \R_+$, $E(t)= (x/|x|^2)\ast  \rho(t)$;

\item $\rho$ is a global weak solution of the 2D Euler equation \eqref{syst:Euler} in the sense of distributions.

\end{enumerate}

\end{theorem}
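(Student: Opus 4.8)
The plan is to run the standard compactness programme for singular limits of Vlasov--Poisson — uniform a priori bounds, extraction of a limit point, passage to the limit in the equation — with one twist: the passage to the limit is carried out on the weak formulation tested against functions of the \emph{guiding--centre} variable $x+\eps v^\perp$. Differentiating $\psi(x+\eps v^\perp)$ along the characteristic flow makes the singular contributions $v/\eps$ and $v^\perp/\eps^2$ cancel identically, so that — unlike in the fluid-mechanical formulations — no analysis of the rapidly oscillating momentum-flux (stress) tensor is required.

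\emph{A priori bounds.} Since the $L^p$ norms and the energy $\mathcal H$ of $f_\eps$ are non-increasing, \eqref{hyp:ini-0}, \eqref{hyp:ini-1} and \eqref{hyp:ini-3} give, uniformly in $\eps$, bounds on $\|f_\eps(t)\|_{L^1}$, $\|f_\eps(t)\|_{L^\infty}$ and $\mathcal H(f_\eps(t))$. Splitting $-\ln|x-y|$ into its nonnegative part on $\{|x-y|\le1\}$ and the part on $\{|x-y|\ge1\}$, where $\ln|x-y|\le|x|+|y|$, the potential energy is bounded above by $C\|f_\eps\|_{L^1}\bigl(\|f_\eps\|_{L^1}+\int|x|^2\rho_\eps\,dx\bigr)$, so a bound on the second moment yields one on the kinetic energy $\iint|v|^2f_\eps\,dx\,dv$. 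The second moment is propagated through the guiding centre: along the characteristics of \eqref{syst:VP} one has $\tfrac{d}{dt}(X+\eps V^\perp)=E_\eps^\perp(X)$, whence $\tfrac{d}{dt}\iint|x+\eps v^\perp|^2f_\eps\,dx\,dv=2\int x\cdot E_\eps^\perp\rho_\eps\,dx+2\eps\int E_\eps\cdot j_\eps\,dx$ with $j_\eps=\int vf_\eps\,dv$; the first integral vanishes by the antisymmetry of the kernel $x/|x|^2$ (the conservation of the moment of inertia for the limiting Euler flow), and the second is estimated by $\|E_\eps\|_{L^4}\lesssim\|\rho_\eps\|_{L^{4/3}}$ and $\|j_\eps\|_{L^{4/3}}\lesssim\|\rho_\eps\|_{L^2}^{1/2}\bigl(\iint|v|^2f_\eps\bigr)^{1/2}$. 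Closing these estimates via the interpolation inequality $\|\rho_\eps\|_{L^2}^2\lesssim\|f_\eps\|_{L^\infty}\iint|v|^2f_\eps$ yields, on every $[0,T]$ and uniformly in $\eps$, a bound for $\rho_\eps$ in $L^\infty(0,T;L^1\cap L^2(\RR))$; since $\diver E_\eps=2\pi\rho_\eps$ and $\curl E_\eps=0$, Calder\'on--Zygmund then bounds $E_\eps$ in $L^\infty(0,T;H^1_{\loc}(\RR)\cap L^4(\RR))$.

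\emph{Compactness.} For $\psi\in C_c^\infty(\RR)$, testing the weak formulation of \eqref{syst:VP} against $\psi(x+\eps v^\perp)$ (licit after a routine truncation, $f_\eps$ remaining compactly supported) and using $v^\perp\cdot(\nabla\psi)^\perp=v\cdot\nabla\psi$ to cancel the $O(1/\eps)$ contributions gives
\begin{equation*}
\frac{d}{dt}\iint\psi(x+\eps v^\perp)f_\eps\,dx\,dv=\iint\nabla\psi(x+\eps v^\perp)\cdot E_\eps^\perp(t,x)\,f_\eps\,dx\,dv,
\end{equation*}
whose right-hand side is bounded by $\|\nabla\psi\|_{L^\infty}\|E_\eps\|_{L^2}\|\rho_\eps\|_{L^2}$, while $\bigl|\iint\psi(x+\eps v^\perp)f_\eps-\int\psi\rho_\eps\,dx\bigr|\le\eps\|\nabla\psi\|_{L^\infty}\iint|v|f_\eps$. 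Hence $t\mapsto\int\psi\rho_\eps(t)\,dx$ is, on each $[0,T]$, equicontinuous up to an $O(\eps)$ error; by Arzel\`a--Ascoli, a diagonal extraction over a countable family dense in $L^2(\RR)$, and the $L^\infty(0,T;L^2)$ bound, there is $\eps_n\to0$ along which $\rho_{\eps_n}\to\rho$ in $C(\R_+,L^2(\RR)-w)$. Since the $L^2$ bound for $\rho_{\eps_n}(t)$ above is, for each fixed $t$, bounded as $n\to\infty$ by a constant independent of $t$, weak lower semicontinuity gives $\rho\in L^\infty(\R_+,L^2(\RR))$. Set $E:=(x/|x|^2)\ast\rho$: for each $t$, $E_{\eps_n}(t)=(x/|x|^2)\ast\rho_{\eps_n}(t)\rightharpoonup E(t)$ in $L^2_{\loc}(\RR)$, and since $\{E_{\eps_n}\}$ is bounded in $H^1_{\loc}$, Rellich's theorem makes this convergence strong; the uniformity in $t$ of the weak convergence of $\rho_{\eps_n}$ (combined once more with the $H^1_{\loc}$ bound, through a subsequence argument $t_n\to t_*$) upgrades it to $E_{\eps_n}\to E$ in $C(\R_+,L^2_{\loc}(\RR))$, and likewise $E\in C(\R_+,L^2_{\loc}(\RR))$ with $E(t)=(x/|x|^2)\ast\rho(t)$ for all $t$. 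This proves (1)--(3).

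\emph{Passage to the limit, and the main difficulty.} Multiplying the displayed identity by $\chi\in C_c^\infty([0,\infty))$, integrating in time and letting $\eps=\eps_n\to0$: the terms $\int_0^\infty\chi'(t)\iint\psi(x+\eps v^\perp)f_\eps$ and $\chi(0)\iint\psi(x+\eps v^\perp)f_\eps^0$ converge, by the $O(\eps)$ closeness to $\int\psi\rho_\eps\,dx$ (uniform in $t$) and $\rho_{\eps_n}\to\rho$ in $C(\R_+,L^2-w)$, to $\int_0^\infty\chi'(t)\int\psi\rho\,dx\,dt$ and $\chi(0)\int\psi\rho_0\,dx$, where $\rho_0$ is the weak $L^2$ limit (along a further subsequence) of $\rho_{\eps_n}^0$; on the right-hand side we write $\nabla\psi(x+\eps v^\perp)=\nabla\psi(x)+O(\eps|v|\,\|\nabla^2\psi\|_{L^\infty})$, the remainder vanishing by the moment bounds, while $\int_0^\infty\chi\int\nabla\psi\cdot E_{\eps_n}^\perp\rho_{\eps_n}\,dx\,dt\to\int_0^\infty\chi\int\nabla\psi\cdot E^\perp\rho\,dx\,dt$ by weak--strong convergence ($E_{\eps_n}\to E$ strongly in $L^2_{\loc}$, $\rho_{\eps_n}\rightharpoonup\rho$ in $L^2$). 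This yields $\int_0^\infty\chi'(t)\int\psi\rho\,dx\,dt+\chi(0)\int\psi\rho_0\,dx+\int_0^\infty\chi(t)\int\nabla\psi\cdot E^\perp\rho\,dx\,dt=0$, i.e.\ $\dt\rho+\diver(E^\perp\rho)=0$ in $\mathcal D'$, which, since $\diver E^\perp=-\curl E=0$, is \eqref{syst:Euler} in the distributional sense with $E^\perp=2\pi\nabla^\perp\Delta^{-1}\rho$, proving (4). The genuinely delicate point is the \emph{strong} $L^2_{\loc}$-compactness of $E_\eps$ needed to pass to the limit in the product $E_\eps^\perp\rho_\eps$: it rests on the uniform $L^2$ bound for $\rho_\eps$, hence on the kinetic-energy estimate and the guiding-centre control of the second moment, and it is precisely here that the improved assumption \eqref{hyp:ini-3}, rather than \eqref{hyp:ini-2-sr}, is used; the remaining effort is the bookkeeping of velocity moments making the $O(\eps)$ errors rigorous, which the guiding-centre test functions reduce to routine estimates by removing any need to analyse the oscillating stress tensor.
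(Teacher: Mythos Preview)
Your proposal is correct and follows essentially the same strategy as the paper: both exploit the guiding-centre coordinate $x+\eps v^\perp$ to obtain a weak formulation for $\rho_\eps$ with $O(\eps)$ remainders, then use the uniform $L^2$ bound on $\rho_\eps$ (available precisely under \eqref{hyp:ini-3}) to obtain strong $L^2_{\loc}$ compactness of $E_{\eps_n}$ and pass to the limit in the product $E_\eps^\perp\rho_\eps$ by weak--strong pairing. The paper differs only in minor implementation choices: it observes that $\iint(|x|^2+2\eps\, x\cdot v^\perp)f_\eps\,dx\,dv$ is \emph{exactly} conserved (Proposition~\ref{prop:uni-bis}), bypassing your Gronwall closure and yielding bounds that are uniform on all of $\R_+$ directly, and it obtains the strong convergence $E_{\eps_n}\to E$ in $C(\R_+,L^2_{\loc})$ via a kernel truncation $x/|x|^2=(x/|x|^2)\chi_\delta+(x/|x|^2)(1-\chi_\delta)$ rather than Rellich's theorem, which makes the uniformity in $t$ immediate and avoids the somewhat delicate subsequence argument you sketch.
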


Besides the already mentioned articles by Golse and Saint-Raymond  \cite{golse-sr} and Saint-Raymond \cite{SR-02}, a wide literature has been devoted 
to the mathematical analysis of the Vlasov equation in the limit of large magnetic or electric field.  Brenier \cite{brenier} derived the Euler equation in a different scaling, 
for smooth and well-prepared data, by means of a different method based on the modulated energy. Various asymptotic regimes for linear or non linear Vlasov equations 
were investigated by 
Fr\'enod and Sonnendr\"ucker \cite{frenod-sonnendrucker-98, frenod-sonnendrucker-99, frenod-sonnendrucker-01},  Golse and Saint-Raymond \cite{golse-sr-2, SR-01}, 
Han-Kwan \cite{han-kwan}, Ghendrih, Hauray and Nouri \cite{ghn}, Hauray and Nouri \cite{hauray-nouri}, and more recently by Bostan, Finot and 
Hauray \cite{bostan-finot-hauray} and by Barr\'e, Chiron, Goudon and Masmoudi \cite{barre-chiron-masmoudi}.  The convergence results in \cite{golse-sr, SR-02} rely on the derivation of an equation for the spatial density
with a good control of the large velocities.
Here, the main ingredient of proof is based on a different weak formulation for the spatial density, following from the ODE satisfied by a suitable combination of 
the characteristics along which the density is essentially constant, see Proposition \ref{prop:ODE-Z}. 
This approach actually amounts to focusing on the equation satisfied by the shifted density $f_\eps(t,x-\eps v^\perp,v),$ see Proposition \ref{prop:rho-decale}. 
These so-called gyro-coordinates $(x-v^\perp,v)$ were used in \cite{ghn} 
(see also \cite{hauray-nouri}) for the derivation of a gyrokinetic model from a linear Vlasov equation.
We also mention  that a similar change of variable in the space
variable, in addition to a transformation by rotation in the velocity variable,  has been considered in \cite{frenod-sonnendrucker-01} and in the recent work \cite{bostan-finot-hauray}.

\section{Proof of Theorem \ref{thm:main}}
\label{section:proof-1}

\subsection{Vortex sheet solution of the Euler equation}

We first define the notion of weak solution to the Euler equation \eqref{syst:Euler}, called vortex sheet solution, which is invoked in Theorem \ref{thm:main}.
\begin{definition}[According to \cite{Delort, Schochet}]\label{def:delort} Let $\rho_0\in \mathcal{M}^+(\RR)\cap H^{-1}(\RR)$ be compactly supported.
We say that $\rho\in C(\R_+, \mathcal{M}^+(\R^2)-w^\ast)\cap L^\infty(\R_+,H^{-1}(\RR))$ is a global weak solution of the Euler equation with initial datum 
$\rho_0$ if we have for all 
$\Phi\in C_c^\infty(\R_+\times \R^2)$
\begin{equation*}\begin{split}
\int_{\R^2}\Phi(t,x)\,d\rho(t,x)=\int_{\R^2}&\Phi(0,x)\,d\rho_0(x)+\int_0^t \int_{\R^2} \partial_s\Phi(s,x)\, d\rho(s,x)\,ds\\
&+\int_0^t \iint_{\R^2\times \R^2}H_{\Phi}(x,y)\,d\rho( s,x)\,d\rho(s,y)\:ds,
\end{split}
\end{equation*}
where $$H_\Phi(x,y)=\frac{1}{2}\frac{(x-y)^\perp}{|x-y|^2}\cdot \left( \nabla \Phi(x)-\nabla \Phi(y)\right).$$ 

\end{definition} For any compactly supported $\rho_0$ in $\mathcal{M}^+(\RR)\cap H^{-1}(\RR)$,  
global existence of a corresponding vortex sheet solution (satisfying a slightly different formulation than the one above)
was established by Delort \cite{Delort}. The formulation of Definition \ref{def:delort}, which has been introduced later by Schochet \cite{Schochet}, is motivated by the observation that 
when $\rho$ is a bounded and integrable map,
\begin{equation}\label{eqi:delort}\langle \text{div}(E^\perp \rho),\Phi\rangle_{\mathcal{D}',\mathcal{D}}=-\iint H_{\Phi}(x,y)\rho(x)\rho(y)\,dx\,dy.\end{equation}
Moreover, $H_\Phi$ is defined and continuous off the diagonal $\Delta=\{(x,x)\:|\:x\in \RR\}$ and bounded on $\RR\times \RR$, since 
 $\|H_\Phi\|_{L^\infty(\RR\times \RR))}\leq \|\Phi\|_{W^{2,\infty}}.$ Hence the expression \eqref{eqi:delort} makes sense for 
 $\rho$ as in Definition \ref{def:delort}, since the atomic support a positive measure in $H^{-1}$ is empty \cite{Delort}.

\subsection{Uniform a priori estimates}

In all the remainder of this section, $f_\eps$ denotes the global weak solution of \eqref{syst:VP} with initial data $\fe^0$ satisfying \eqref{hyp:ini-0}, 
\eqref{hyp:ini-1} and \eqref{hyp:ini-2}. Replacing $\|\fe^0\|_{L^\infty}$ by $\max(1, \|\fe^0\|_{L^\infty})$ if necessary, we will always assume that $$
\|\fe^0\|_{L^\infty}\geq 1.$$
The purpose of this paragraph is to collect a priori estimates and basic properties for $f_\eps$ for later use.
The notation $C$ will stand for a constant independent of $\eps$, changing possibly from a line to another.

\begin{proposition}\label{prop:uni}
We have
\begin{equation*}
\sup_{t\in \R_+}\sup_{\eps>0} \left( \|f_\eps(t)\|_{L^1}+\mathcal{H}(f_\eps(t)) \right)<+\infty,
\end{equation*}
and
\begin{equation*}
\sup_{t\in \R_+}\eps^2 \Theta(   \|f_\eps(t)\|_{L^\infty})\leq \eps^2\Theta(\|f_\eps^0\|)\to 0,\quad \text{as }\eps \to 0.
\end{equation*}

\end{proposition}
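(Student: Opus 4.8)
The plan is to prove Proposition \ref{prop:uni} by combining the classical conservation (or monotonicity) properties of the Vlasov-Poisson flow with the hypotheses \eqref{hyp:ini-0}--\eqref{hyp:ini-2} on the initial data, noting that $f_\eps$ is an admissible weak solution so that the standard a priori estimates are available uniformly in $t$ and $\eps$. For the $L^1$ bound, one simply uses that $\|f_\eps(t)\|_{L^1}=\|f_\eps^0\|_{L^1}$ by conservation of mass (transport by a divergence-free field in the full phase space $(x,v)$), which is bounded uniformly in $\eps$ by the first line of \eqref{hyp:ini-1}. Similarly, since the $L^p$ norms are non-increasing in time (as recalled right before the statement of Theorem \ref{thm:main}), we get $\|f_\eps(t)\|_{L^\infty}\le \|f_\eps^0\|_{L^\infty}$ for a.e.\ $t$; because $\Theta$ is nondecreasing on $[1,\infty)$ and $\|f_\eps^0\|_{L^\infty}\ge 1$ by our normalization, this yields $\eps^2\Theta(\|f_\eps(t)\|_{L^\infty})\le \eps^2\Theta(\|f_\eps^0\|_{L^\infty})$, and the right-hand side tends to $0$ as $\eps\to0$ by \eqref{hyp:ini-2}. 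This disposes of the second displayed estimate.

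The more delicate point is the uniform bound on the energy $\mathcal{H}(f_\eps(t))$. The energy $\mathcal{H}$ is conserved (or at least non-increasing) along the Vlasov-Poisson evolution \eqref{syst:VP} despite the singular $1/\eps$ and $1/\eps^2$ scalings: indeed the magnetic term $v^\perp/\eps^2$ does no work since $v^\perp\cdot v=0$, so the formal energy identity is the same as for the unscaled system up to a harmless factor, and $\mathcal{H}(f_\eps(t))\le \mathcal{H}(f_\eps^0)$. Combined with the uniform bound $\sup_\eps \mathcal{H}(f_\eps^0)<+\infty$ from \eqref{hyp:ini-1}, this gives a uniform upper bound on $\mathcal{H}(f_\eps(t))$. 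The only subtlety is that $\mathcal{H}$ is not sign-definite: the interaction term $-\tfrac12\iint \ln|x-y|\rho\rho$ has a logarithmic singularity that can be negative for masses concentrated at scales $>1$ and positive at small scales, so an upper bound on $\mathcal{H}$ does not immediately control the kinetic energy. To pass to a genuine a priori estimate one uses the standard interpolation: splitting $-\ln|x-y|=-\ln|x-y|\,\mathbf 1_{|x-y|\le1}+(-\ln|x-y|)\mathbf 1_{|x-y|>1}$, the short-range part is bounded using $\|\rho_\eps(t)\|_{L^p}\le \|\rho_\eps(t)\|_{L^1}^{1-1/p}\|\rho_\eps(t)\|_{L^\infty}^{1/p}$ together with the control on $\|f_\eps^0\|_{L^\infty}$ and a kinetic-energy interpolation for $\rho_\eps$, while the long-range part is controlled by the second moment $\int |x|^2\rho_\eps(t)\,dx$. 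The second moment itself satisfies a differential inequality (its time derivative involves $\int v\cdot x f_\eps/\eps$ and the field term, both controllable by $\mathcal H$, mass, and the moment itself via Grönwall), but here one only needs it bounded on compact time intervals — and in fact, since the statement claims the bound for all $t\in\R_+$, one should check that the virial-type argument closes globally, which it does because the attractive (here logarithmic) part is sub-critical in 2D.

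Concretely, the steps I would carry out are: (1) record mass conservation to get the $L^1$ bound; (2) record monotonicity of $L^p$ norms and monotonicity of $\Theta$ on $[1,\infty)$ to get the second estimate, invoking \eqref{hyp:ini-2} for the limit; (3) record the energy identity $\tfrac{d}{dt}\mathcal H(f_\eps(t))\le 0$, using $v^\perp\cdot v=0$ to see the magnetic field drops out, hence $\mathcal H(f_\eps(t))\le\mathcal H(f_\eps^0)\le C$ by \eqref{hyp:ini-1}; (4) if one wants the kinetic energy and second moment separately controlled (needed later for tightness of $\rho_\eps$), close a Grönwall loop between $\mathcal H$, the second moment $\int|x|^2\rho_\eps\,dx$, and the $L^p$ norm via the logarithmic Hardy--Littlewood--Sobolev / interpolation inequality in 2D. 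The main obstacle is step (4): making the interpolation estimate uniform in $\eps$ while the $L^\infty$ norm of $f_\eps$ is only controlled through the weak assumption \eqref{hyp:ini-2} — one must be careful that the resulting constant multiplying $\|f_\eps\|_{L^\infty}$-type quantities does not defeat the smallness, but since \eqref{hyp:ini-2} is only invoked for the second displayed estimate and not for the energy bound, the energy argument in fact needs only the crude bound $\|f_\eps(t)\|_{L^\infty}\le\|f_\eps^0\|_{L^\infty}$ entering a logarithmic (hence mild) correction, so the loop closes. All other steps are routine.
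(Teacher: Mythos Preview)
Your steps (1)--(3) are correct and coincide exactly with the paper's proof, which is a two-line appeal to the monotonicity of the energy and of the $L^p$ norms along the flow, combined with the initial-data bounds \eqref{hyp:ini-1} and \eqref{hyp:ini-2}.

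Your step (4) and the surrounding discussion of sign-indefiniteness are, however, unnecessary here and reflect a misreading of what the proposition actually claims. The statement only asks for a uniform \emph{upper} bound on $\mathcal H(f_\eps(t))$, and this is immediate from $\mathcal H(f_\eps(t))\le \mathcal H(f_\eps^0)$ together with \eqref{hyp:ini-1}; no interpolation, virial argument, or Gr\"onwall loop is required. The separate control of the kinetic energy $\iint |v|^2 f_\eps$ and of the spatial moment $\int |x|^2 \rho_\eps$ --- which does require dealing with the sign-indefiniteness of the logarithmic interaction --- is the content of the \emph{subsequent} Propositions \ref{prop:uni-bis} and \ref{prop:uni-2} in the paper, not of this one. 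So your proof is correct but over-engineered: the ``main obstacle'' you identify is not an obstacle for Proposition \ref{prop:uni} at all.
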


\begin{proof}
This is an immediate consequence of the fact that for \eqref{syst:VP}, the energy and the norms of $f_\eps$ satisfy
\begin{equation*}\label{ineq:uni-bounds}
\forall t\in \R_+,\quad \mathcal{H}(f_\eps(t))\leq  \mathcal{H}_\eps(0),\quad \|f_\eps(t)\|_{L^p}\leq \|f_\eps^0\|_{L^p}.
\end{equation*}
\end{proof}
\begin{proposition}\label{prop:uni-bis}We have for all $t\in \R_+$ and for all $0<\eps<1$
\begin{equation*}\begin{split}\int_{\RR}|x|^2\rho_\eps(t,x)\,dx&\leq C\left(1+\eps^2 \iint_{\RR\times \RR}|v|^2 f_\eps^0(x,v)\,dx\,dv\right)\\&+C\eps^2 \iint_{\RR\times \RR}|v|^2 f_\eps(t,x,v)\,dx\,dv.\end{split}
\end{equation*}

\end{proposition}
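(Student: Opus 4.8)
The plan is to follow the two quadratic moments
\[
A_\eps(t)=\int_{\RR}|x|^2\rho_\eps(t,x)\,dx=\iint_{\RR\times\RR}|x|^2 f_\eps(t,x,v)\,dx\,dv,\qquad
C_\eps(t)=\iint_{\RR\times\RR}x\cdot v^\perp\, f_\eps(t,x,v)\,dx\,dv,
\]
which are finite for every $t$ since $f_\eps(t)$ is compactly supported, and to show that the combination $A_\eps+2\eps C_\eps$ is constant in time. I would first carry out the computation for smooth solutions, where differentiation under the integral and the integrations by parts below produce no boundary terms; the general case then follows by approximating $f_\eps^0$ in $L^1\cap L^\infty$ by smooth compactly supported densities with convergent second moments (alternatively, directly, $f_\eps$ being transported by the flow of the log-Lipschitz field in \eqref{syst:VP}). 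Using \eqref{syst:VP} and integrating by parts, the $\nabla_v$ term in $\frac{d}{dt}A_\eps$ drops because $|x|^2$ is $v$-independent and $\nabla_v\cdot(E_\eps/\eps+v^\perp/\eps^2)=0$, so that $\frac{d}{dt}A_\eps(t)=\frac{2}{\eps}\iint_{\RR\times\RR}x\cdot v\,f_\eps\,dx\,dv$; likewise, using $\nabla_x(x\cdot v^\perp)=v^\perp$ (so the $\nabla_x$ term vanishes, since $v\cdot v^\perp=0$) and $\nabla_v(x\cdot v^\perp)=-x^\perp$, one obtains
\[
\frac{d}{dt}C_\eps(t)=-\frac{1}{\eps}\int_{\RR}x^\perp\cdot E_\eps(t,x)\,\rho_\eps(t,x)\,dx-\frac{1}{\eps^2}\iint_{\RR\times\RR}x\cdot v\,f_\eps(t,x,v)\,dx\,dv.
\]

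The key point is that the electric term vanishes: since $E_\eps(t,x)=\int_{\RR}(x-y)|x-y|^{-2}\rho_\eps(t,y)\,dy$ and $\rho_\eps(t)\in L^1\cap L^\infty$ is compactly supported (so the kernel is integrable against $\rho_\eps(t)\otimes\rho_\eps(t)$ and Fubini applies), symmetrizing in $(x,y)$ gives
\[
\int_{\RR}x^\perp\cdot E_\eps\,\rho_\eps\,dx=\frac12\iint_{\RR\times\RR}\frac{(x-y)^\perp\cdot(x-y)}{|x-y|^2}\,\rho_\eps(x)\rho_\eps(y)\,dx\,dy=0.
\]
Hence $\frac{d}{dt}\big(A_\eps+2\eps C_\eps\big)=\frac{2}{\eps}\iint x\cdot v\,f_\eps-\frac{2}{\eps}\iint x\cdot v\,f_\eps=0$, so $A_\eps(t)+2\eps C_\eps(t)=A_\eps(0)+2\eps C_\eps(0)$ for all $t$. (Equivalently, this is the conservation of $\iint|x+\eps v^\perp|^2 f_\eps-\eps^2\iint|v|^2 f_\eps$, reflecting the gyro-variable $x+\eps v^\perp$ that underlies the whole paper.) It then remains to absorb the cross terms by Young's inequality with an $\eps$-dependent weight: from $2\eps|x\cdot v^\perp|\le 2\eps|x||v|\le\frac12|x|^2+2\eps^2|v|^2$ one gets $2\eps|C_\eps(t)|\le\frac12 A_\eps(t)+2\eps^2\iint_{\RR\times\RR}|v|^2 f_\eps(t,x,v)\,dx\,dv$, and from $2\eps|x||v|\le|x|^2+\eps^2|v|^2$ one gets $2\eps|C_\eps(0)|\le A_\eps(0)+\eps^2\iint_{\RR\times\RR}|v|^2 f_\eps^0\,dx\,dv$. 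Inserting these in $A_\eps(t)=A_\eps(0)+2\eps C_\eps(0)-2\eps C_\eps(t)$ and moving $\frac12 A_\eps(t)$ to the left gives $A_\eps(t)\le 4A_\eps(0)+2\eps^2\iint|v|^2 f_\eps^0+4\eps^2\iint|v|^2 f_\eps(t)$, and since $A_\eps(0)=\int_{\RR}|x|^2\rho_\eps^0\,dx$ is bounded uniformly in $\eps$ by \eqref{hyp:ini-1}, this is exactly the asserted estimate.

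The step I expect to be the real content is the cancellation $\frac{d}{dt}(A_\eps+2\eps C_\eps)=0$, not the routine identities above. A naive bound on $\frac{d}{dt}A_\eps=\frac{2}{\eps}\iint x\cdot v\,f_\eps$, or on $\int|x|^2\rho_\eps$ directly along the characteristics, would eventually require a uniform-in-$\eps$ control of $\int_{\RR}|E_\eps(t,x)|^2\rho_\eps(t,x)\,dx$; such a bound is not available here, because in two space dimensions the logarithmic potential part of the energy $\mathcal{H}$ is sign indefinite and does not bound the kinetic energy from below, so the kinetic energy --- hence the field --- is only controlled by the very quantity one is trying to estimate. Passing to the combination $A_\eps+2\eps C_\eps$ (i.e. to the gyro-variable $x+\eps v^\perp$, along which the $O(\eps^{-1})$ and $O(\eps^{-2})$ contributions telescope) removes precisely this obstruction and reduces everything to the antisymmetry of the interaction kernel.
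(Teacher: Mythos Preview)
Your proof is correct and follows essentially the same route as the paper: both establish the exact conservation of $\iint(|x|^2+2\eps\, x\cdot v^\perp)f_\eps=\iint(|x+\eps v^\perp|^2-\eps^2|v|^2)f_\eps$ by computing its time derivative and killing the only surviving term $-2\int x^\perp\cdot E_\eps\,\rho_\eps\,dx$ via the antisymmetry of the kernel under $(x,y)\mapsto(y,x)$, then absorb the cross term by Young's inequality. The only cosmetic difference is that the paper justifies the computation by inserting a cutoff $\chi(|x|/R_\eps)\chi(|v|/R_\eps)$ in the weak formulation rather than by approximating with smooth data.
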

\begin{proof} Let $T>0$ and $R_\eps>0$ such that $\text{supp}(f_\eps(t))$ is included in $\overline{B}(0,R_\eps)\times \overline{B}(0,R_\eps)$ on $[0,T]$. We set $\varphi(x,v)=\left( |x|^2+2\eps x\cdot v^\perp \right)\chi(|x|/R_\eps)\chi(|v|/R_\eps)$, where $\chi$ is a smooth cut-off function such that $\chi=1$ on $B(0,1)$ and $\chi=0$ on $B(0,2)^c$. For $t\in [0,T)$, we compute using the weak formulation of \eqref{syst:VP} for the test function $\varphi$,
\begin{equation*}
\begin{split}
&\frac{d}{dt}\iint_{\RR\times \RR}\left( |x+\eps v^\perp|^2-\eps^2|v|^2\right)f_\eps(t,x,v)\,dx\,dv
\\&=
\frac{d}{dt}\iint_{\RR\times \RR}\varphi(x,v)f_\eps(t,x,v)\,dx\,dv\\
&=\iint_{\RR \times \RR}f_\eps(t,x,v) \left(\frac{v}{\eps}\cdot \nabla_x \varphi+\frac{E_\eps}{\eps}\cdot \nabla_v \varphi+\frac{v^\perp}{\eps^2}\cdot \nabla_v \varphi\right)\,dx\,dv\\
&=\iint_{\RR \times \RR}f_\eps(t,x,v) \left(\frac{v}{\eps}\cdot(2x+2 \eps v^\perp)-2 E_\eps \cdot x^\perp-2\frac{v^\perp}{\eps}\cdot x^\perp \right)\,dx\,dv\\
&=-2\int_{\RR }\rho_\eps(t,x) E_\eps(t,x) \cdot x^\perp \,dx.
\end{split}
\end{equation*}On the other hand, in view of the definition of $E_\eps$, we obtain by a classical symmetrization argument
\begin{equation*}
\begin{split}
\int_{\RR }\rho_\eps(t,x) E_\eps(t,x) \cdot x^\perp \,dx&=
\iint_{\RR\times \RR }\rho_\eps(t,x) \rho_\eps(t,y) \frac{x-y}{|x-y|^2} \cdot x^\perp \,dx\,dy\\
&=\frac{1}{2}\iint_{\RR \times \RR }\rho_\eps(t,x) \rho_\eps(t,y) \frac{x-y}{|x-y|^2} \cdot (x^\perp-y^\perp) \,dx\,dy=0.
\end{split}
\end{equation*}
Since $|x|^2\leq 2(|x+\eps v^\perp|^2-\eps^2|v|^2)+4\eps^2|v|^2$, it follows that
\begin{equation*}
\begin{split}
&\iint_{\RR\times \RR}|x|^2f_\eps(t,x,v)\,dx\,dv\\&\leq 2\iint_{\RR\times \RR}\left( |x+\eps v^\perp|^2-\eps^2|v|^2\right)f_\eps^0(x,v)\,dx\,dv+4\eps^2 \iint_{\RR\times \RR}|v|^2f_\eps(t,x,v)\,dx\,dv\\
&\leq C\left(\int_{\RR}|x|^2 \rho_\eps^0(x)\,dx+\eps^2 \iint_{\RR\times \RR}|v|^2f_\eps^0(x,v)\,dx\,dv\right)\\&+C\eps^2 \iint_{\RR\times \RR}|v|^2f_\eps(t,x,v)\,dx\,dv.
\end{split}
\end{equation*}
\end{proof}

\begin{proposition}\label{prop:uni-2}We have\begin{equation*}
\sup_{t\in \R_+}\sup_{\eps>0}\left( \iint_{\RR\times \RR}|v|^2 f_\eps(t,x,v)\,dx\,dv+\int_{\RR}|x|^2\rho_\eps(t,x)\,dx\right)<+\infty,
\end{equation*}and 
\begin{equation*}
\sup_{t\in \R_+}\|\rho_\eps(t)\|_{L^2}\leq C \|f_\eps^0\|_{L^\infty}^{1/2}.
\end{equation*}

Finally, setting $$J_\eps(t,x)=\int_{\R^2}|v|\fe(t,x,v)\,dv,$$we have
\begin{equation*}
\sup_{t\in \R_+}\|J_\eps(t)\|_{L^{4/3}}\leq C \|f_\eps^0\|_{L^\infty}^{1/4} 
\end{equation*}
and
\begin{equation*}
\sup_{t\in \R_+}\sup_{\eps>0}\|J_\eps(t)\|_{L^{1}}<+\infty.
\end{equation*}

\end{proposition}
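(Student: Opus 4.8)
The plan is to establish the bound on the second moments first, from which the four remaining estimates follow by routine velocity-moment interpolation.

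\textbf{Step 1 (the heart of the argument): the second moments.} I would run a bootstrap between the energy bound of Proposition~\ref{prop:uni} and the virial identity of Proposition~\ref{prop:uni-bis}. Writing the logarithm as $\ln=\ln_+-\ln_-$ and noting that the near-diagonal part $-\tfrac12\iint\ln_-|x-y|\,\rho_\eps(t,x)\rho_\eps(t,y)\,dx\,dy$ of the potential energy is $\le 0$, the inequality $\mathcal H(f_\eps(t))\le\mathcal H(f_\eps^0)$ gives
\[
\tfrac12\iint_{\RR\times\RR}|v|^2 f_\eps(t)\,dx\,dv\le \mathcal H(f_\eps^0)+\tfrac12\iint_{\RR\times\RR}\ln_+|x-y|\,\rho_\eps(t,x)\rho_\eps(t,y)\,dx\,dy.
\]
Using the crude bound $\ln_+|x-y|\le C(1+|x|^2+|y|^2)$ and conservation of mass $\|\rho_\eps(t)\|_{L^1}=\|f_\eps^0\|_{L^1}$, the last term is $\le C\bigl(1+\int_{\RR}|x|^2\rho_\eps(t,x)\,dx\bigr)$, hence
\[
\iint_{\RR\times\RR}|v|^2 f_\eps(t)\,dx\,dv\le C+C\int_{\RR}|x|^2\rho_\eps(t,x)\,dx.
\]
At $t=0$ this, combined with \eqref{hyp:ini-1}, already yields $\sup_{\eps}\iint|v|^2 f_\eps^0\,dx\,dv<\infty$. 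Inserting this fact and Proposition~\ref{prop:uni-bis} into the last display gives, with $C$ uniform in $t$ and in $\eps<1$,
\[
\iint_{\RR\times\RR}|v|^2 f_\eps(t)\,dx\,dv\le C+C\eps^2\iint_{\RR\times\RR}|v|^2 f_\eps(t)\,dx\,dv.
\]
Since $f_\eps(t)$ is compactly supported the left-hand side is finite, so for $\eps$ below a fixed threshold the $\eps^2$-term is absorbed, $\iint|v|^2 f_\eps(t)$ is bounded uniformly in $t$, and then Proposition~\ref{prop:uni-bis} bounds $\int|x|^2\rho_\eps(t)$ as well.

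\textbf{Step 2: the $L^2$, $L^{4/3}$ and $L^1$ bounds.} These reduce to the classical interpolation between an $L^\infty_{x,v}$ bound and a velocity moment. Splitting $\int\cdot\,dv$ at $|v|=R$ and optimising $R$ pointwise in $x$ yields
\[
\rho_\eps(t,x)\le C\|f_\eps(t)\|_{L^\infty}^{1/2}\Bigl(\int_{\R^2}|v|^2 f_\eps(t,x,v)\,dv\Bigr)^{1/2},\qquad J_\eps(t,x)\le C\|f_\eps(t)\|_{L^\infty}^{1/4}\Bigl(\int_{\R^2}|v|^2 f_\eps(t,x,v)\,dv\Bigr)^{3/4}.
\]
Raising to the powers $2$ and $4/3$, integrating in $x$, and using $\|f_\eps(t)\|_{L^\infty}\le\|f_\eps^0\|_{L^\infty}$ together with Step 1, I get $\|\rho_\eps(t)\|_{L^2}\le C\|f_\eps^0\|_{L^\infty}^{1/2}$ and $\|J_\eps(t)\|_{L^{4/3}}\le C\|f_\eps^0\|_{L^\infty}^{1/4}$. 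For the $L^1$ bound, Cauchy--Schwarz in $(x,v)$ gives $\|J_\eps(t)\|_{L^1}\le\|f_\eps(t)\|_{L^1}^{1/2}\bigl(\iint|v|^2 f_\eps(t)\bigr)^{1/2}$, which is uniformly bounded by mass conservation and Step 1.

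\textbf{Main obstacle.} Everything hinges on the absorption in Step 1, which requires the coefficient of $\iint|v|^2 f_\eps(t)$ coming from Proposition~\ref{prop:uni-bis} to be $<1$, hence $\eps$ small (which is all that the sequel requires). Put differently, the real difficulty is to bound the potential energy $-\tfrac12\iint\ln|x-y|\rho_\eps\rho_\eps$ from below uniformly: its singular part is harmless by sign, but its far-field part is controlled only by the spatial second moment $\int|x|^2\rho_\eps(t)$, precisely the quantity under estimation. It is the exact cancellation $\tfrac{d}{dt}\iint\bigl(|x+\eps v^\perp|^2-\eps^2|v|^2\bigr)f_\eps=0$ behind Proposition~\ref{prop:uni-bis} that lets this loop close.
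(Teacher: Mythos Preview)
Your proposal is correct and follows essentially the same bootstrap as the paper: bound the kinetic energy $K_\eps=\iint|v|^2 f_\eps$ by the energy plus the far-field part $\iint\ln_+|x-y|\rho_\eps\rho_\eps$, control the latter by $\int|x|^2\rho_\eps$, feed in Proposition~\ref{prop:uni-bis}, and close. The only notable difference is in how the $\ln_+$ term is estimated: the paper uses $\ln_+|x-y|\le|x|+|y|$ together with Cauchy--Schwarz to obtain $K_\eps\le C+C\bigl(1+\eps^2 K_\eps\bigr)^{1/2}$, which resolves to $K_\eps\le C$ for every $0<\eps<1$, while your cruder bound $\ln_+|x-y|\le C(1+|x|^2+|y|^2)$ leads to $K_\eps\le C+C\eps^2 K_\eps$ and hence requires $\eps$ below a fixed threshold to absorb. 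As you note, this is harmless for the gyrokinetic limit. Step~2 is identical to the paper's.
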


\begin{proof}
The proof is classical, 
but we provide  some details for sake of completeness. We omit the dependence on $t$ for simplicity. Setting $$K_\eps=\iint_{\RR\times \RR}|v|^2 f_\eps(x,v)\,dx\,dv,$$ we have the interpolation
inequality (see e.g. \cite[Lemma 3.1]{golse-sr} or \cite[Lemma 2.4]{SR-02})
\begin{equation*}
\|\rho_\eps\|_{L^2}\leq C \|\fe\|_{L^\infty}^{1/2} K_\eps^{1/2}\leq C \|\fe^0\|_{L^\infty}^{1/2} K_\eps^{1/2}.
\end{equation*}
On the other hand, Cauchy-Schwarz inequality and Proposition \ref{prop:uni-bis} yield
\begin{equation*}
\begin{split}
K_\eps&\leq2\mathcal{H}(f_\eps)+\iint_{\RR\times \RR}\ln_+|x-y|\rho_\eps(x)\rho_\eps(y)\,dx\,dy\\
&\leq 2\mathcal{H}(f_\eps^0)+\iint_{\RR\times \RR}\left(|x|+|y|\right)\rho_\eps(x)\rho_\eps(y)\,dx\,dy\\
&\leq C+2\|\rho_\eps\|_{L^1}^{3/2}\left(\int_{\RR}|x|^2\rho_\eps(x)\,dx\right)^{1/2}\\
&\leq C + C\left(1+\eps^2 K_\eps(0)+\eps^2 K_\eps\right)^{1/2}.
\end{split}
\end{equation*}
For the same reasons, we have
\begin{equation*}
\begin{split}
K_\eps(0)&\leq 2 \mathcal{H}(f_\eps^0)+\iint_{\RR\times \RR}\ln_+|x-y|\rho_\eps^0(x)\rho_\eps^0(y)\,dx\,dy\\
&\leq C+ C\|\rho_\eps^0\|_{L^1}^{3/2}\left(\int_{\RR}|x|^2\rho_\eps^0(x)\,dx\right)^{1/2}\leq C
\end{split}\end{equation*} in view of \eqref{hyp:ini-2}.
So we conclude that $K_\eps\leq C$, and by Proposition \ref{prop:uni-bis}, it also follows that $\int_{\RR}|x|^2\rho_\eps(t,x)\,dx\leq C$.

\medskip

Again by interpolation, we have 
\begin{equation*}
\begin{split}
\|J_\eps\|_{L^{4/3}}\leq C \|f_\eps\|_{L^\infty}^{1/4} K_\eps^{3/4}\leq C \|f_\eps^0\|_{L^\infty}^{1/4} K_\eps^{3/4},
\end{split}
\end{equation*}
and by Cauchy-Schwarz inequality, we obtain
\begin{equation*}
 \|J_\eps\|_{L^1}\leq C\|f_\eps\|_{L^1}^{1/2}K_\eps^{1/2}\leq C\|f_\eps^0\|_{L^1}^{1/2}K_\eps^{1/2},
\end{equation*}
so the conclusion follows.

\end{proof}

To conclude this paragraph, we introduce a smooth, positive function $\tilde{\rho}_\eps$, compactly supported in $B(0,1)$, such that
\begin{equation}
\label{bound:stationary}
\int_{\RR}\tilde{\rho}_\eps(x)\,dx=\int_{\RR}{\rho}_\eps(x)\,dx,\quad \sup_{\eps>0}\|\tilde{\rho}_\eps\|_{L^\infty}<+\infty\end{equation}
and we set $$\tilde{E}_\eps(x)= \int_{\RR}\frac{x-y}{|x-y|^2}\tilde{\rho}_{\eps}(y)\,dy.$$Since $\int (\rho_\eps(t)-\tilde{\rho}_\eps)=0$ and 
$\rho_\eps(t)-\tilde{\rho}_\eps$ is compactly supported, it is well-known that $E_\eps(t)-\tilde{E}_\eps$ belongs to $L^2(\RR)$, see e.g. \cite[Proposition 3.3]{majda-bertozzi}. 
In addition,
\begin{proposition}
We have
$$\sup_{t\in \R_+}\sup_{\eps>0}\|E_\eps(t)-\tilde{E}_\eps\|_{L^2}<+\infty.$$
\label{prop:e-barre}
\end{proposition}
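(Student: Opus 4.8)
The plan is to identify $\|E_\eps(t)-\tilde E_\eps\|_{L^2}^2$ with an explicit logarithmic energy of the zero‑mean, compactly supported function $g_\eps(t):=\rho_\eps(t)-\tilde\rho_\eps$, and then to control it using \emph{only} the quantities that are uniform in $\eps$: the energy bound of Proposition~\ref{prop:uni}, the bounds on $\|\rho_\eps(t)\|_{L^1}$ and $\int|x|^2\rho_\eps(t)\,dx$ from Proposition~\ref{prop:uni-2}, and the properties \eqref{bound:stationary} of $\tilde\rho_\eps$ (support in $B(0,1)$, uniformly bounded $L^1$ and $L^\infty$ norms). The point is that the \emph{non‑uniform} bound $\|\rho_\eps(t)\|_{L^2}\le C\|f_\eps^0\|_{L^\infty}^{1/2}$ must be avoided.

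First I would write $E_\eps(t)=\nabla\phi_\eps(t)$, $\tilde E_\eps=\nabla\tilde\phi_\eps$ with $\phi_\eps(t)=\ln|\cdot|\ast\rho_\eps(t)$ and $\tilde\phi_\eps=\ln|\cdot|\ast\tilde\rho_\eps$, so that $\psi_\eps(t):=\phi_\eps(t)-\tilde\phi_\eps$ solves $\Delta\psi_\eps(t)=2\pi g_\eps(t)$ and $\nabla\psi_\eps(t)=E_\eps(t)-\tilde E_\eps$. Since $g_\eps(t)$ is compactly supported with $\int g_\eps(t)\,dx=0$, the first‑moment cancellation yields the decay $\psi_\eps(t,x)=O(|x|^{-1})$ and $\nabla\psi_\eps(t,x)=O(|x|^{-2})$ as $|x|\to\infty$. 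Combined with the fact, recalled just before the statement, that $\nabla\psi_\eps(t)\in L^2(\RR)$, an integration by parts on $B(0,R)$ followed by $R\to\infty$ (the boundary flux is $O(R^{-2})$) gives
\[
\|E_\eps(t)-\tilde E_\eps\|_{L^2}^2=-2\pi\iint_{\RR\times\RR}\ln|x-y|\,g_\eps(t,x)\,g_\eps(t,y)\,dx\,dy .
\]
Since $\rho_\eps(t)\in L^2$ and all densities are compactly supported, the right‑hand side is absolutely convergent and may be expanded along $g_\eps=\rho_\eps-\tilde\rho_\eps$.

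It then remains to bound the three resulting pieces from above. For $-2\pi\iint\ln|x-y|\rho_\eps(t)\rho_\eps(t)$ I would use the definition of $\mathcal H$ to rewrite it as $2\pi\bigl(2\mathcal H(f_\eps(t))-\iint|v|^2 f_\eps(t)\bigr)\le 4\pi\mathcal H(f_\eps(t))\le 4\pi\mathcal H(f_\eps^0)$, uniformly bounded by Proposition~\ref{prop:uni}; this is exactly where the missing $L^2$ control of $\rho_\eps$ is traded for the energy. For the cross term $4\pi\iint\ln|x-y|\rho_\eps(t,x)\tilde\rho_\eps(y)$, bounding $\ln|x-y|\le\ln_+|x-y|\le|x|+|y|$ and using $\supp\tilde\rho_\eps\subset B(0,1)$ reduces it to $C\|\tilde\rho_\eps\|_{L^1}\bigl(\int|x|\rho_\eps(t)\,dx+\|\rho_\eps(t)\|_{L^1}\bigr)$ with $\int|x|\rho_\eps(t)\,dx\le\|\rho_\eps(t)\|_{L^1}^{1/2}\bigl(\int|x|^2\rho_\eps(t)\,dx\bigr)^{1/2}$, controlled uniformly by Propositions~\ref{prop:uni} and~\ref{prop:uni-2}. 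For the last term $-2\pi\iint\ln|x-y|\tilde\rho_\eps\tilde\rho_\eps$, use $-\ln|x-y|\le\ln_+\tfrac1{|x-y|}$ and $\sup_x\int_{B(0,1)}\ln_+\tfrac1{|x-y|}\,dy<\infty$ to bound it by $C\|\tilde\rho_\eps\|_{L^\infty}\|\tilde\rho_\eps\|_{L^1}$. Summing the three estimates gives $\|E_\eps(t)-\tilde E_\eps\|_{L^2}^2\le C$ uniformly in $t$ and $\eps$. The main obstacle is the rigorous justification of the integration‑by‑parts identity: one needs a priori that $\nabla\psi_\eps(t)\in L^2$ and that the decay kills the boundary term — this decay uses the compact support of $g_\eps(t)$ but not any uniformity of that support in $\eps$, so it is harmless. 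The remainder is bookkeeping, the only non‑mechanical choice being to express the quadratic‑in‑$\rho_\eps$ contribution through $\mathcal H$ rather than through a norm of $\rho_\eps$.
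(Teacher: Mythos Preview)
Your proposal is correct and follows essentially the same route as the paper: both derive the identity $\|E_\eps(t)-\tilde E_\eps\|_{L^2}^2=-2\pi\iint\ln|x-y|\,g_\eps(t,x)g_\eps(t,y)\,dx\,dy$ by integration by parts, expand along $g_\eps=\rho_\eps-\tilde\rho_\eps$, and bound the $\rho_\eps\rho_\eps$ piece through $\mathcal H(f_\eps(t))$, the cross piece through $\ln_+|x-y|\le|x|+|y|$ and the second moment, and the $\tilde\rho_\eps\tilde\rho_\eps$ piece through the local integrability of the logarithm and \eqref{bound:stationary}. One minor slip: the decay $\psi_\eps(t,x)=O(|x|^{-1})$, $\nabla\psi_\eps(t,x)=O(|x|^{-2})$ comes from the \emph{zeroth}-moment cancellation $\int g_\eps(t)=0$, not a first-moment condition; apart from this wording, your justification of the boundary term is more explicit than the paper's, which simply invokes the identity.
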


\begin{proof}The computations below are quite standard and we perform them for sake of completeness. We first integrate by parts, using that $E_\eps(t)-\tilde{E}_\eps=
2\pi \nabla G\ast (\rho_\eps(t)-\tilde{\rho}_\eps)$, with $G$ 
the fundamental solution of the Laplacian in $\RR$. Then we expand, which yields
\begin{equation*}\begin{split}
\|E_\eps(t)-\tilde{E}_\eps\|_{L^2}^2
&=-2\pi\iint_{\R^2\times \RR}
\ln|x-y|\left(\rho_\eps-\tilde{\rho}_\eps\right)(t,x)\left(\rho_\eps-\tilde{\rho}_\eps\right)(t,y)\,dx\,dy\\
&\leq -2\pi\iint_{\RR\times \RR}\ln|x-y|\rho_\eps(t,x)\rho_\eps(t,y)\,dx\,dy\\
&-2\pi\iint_{B(0,1)^2}\ln_-|x-y|\tilde{\rho}_\eps(x)\tilde{\rho}_\eps(y)\,dx\,dy\\
&+4\pi\iint_{\RR\times B(0,1)}\ln_+|x-y|\rho_\eps(t,x)\tilde{\rho}_\eps(y)\,dx\,dy.\end{split}
\end{equation*}
Then we use Proposition \ref{prop:uni-2} and \eqref{bound:stationary} to infer that
\begin{equation*}\begin{split}
&\|E_\eps(t)-\tilde{E}_\eps\|_{L^2}^2\\
&\leq C\Big(\mathcal{H}(f_\eps(t))+\|\tilde{\rho}_\eps\|_{L^\infty}^2+\iint_{\RR\times B(0,1)}(|x|+|y|)\rho_\eps(t,x)\tilde{\rho}_\eps(y)\,dx\,dy\Big)\\
&\leq C\Big(\mathcal{H}(f_\eps(t))+\|\tilde{\rho}_\eps\|_{L^\infty}^2+\|\tilde{\rho}_\eps\|_{L^\infty}\|\rho_\eps(t)\|_{L^1}^{1/2}\left(\int_{\RR}|x|^2\rho_\eps(t,x)\,dx\right)^{1/2}\Big)\\
&+C\|\tilde{\rho}_\eps\|_{L^\infty}\|\rho_\eps(t)\|_{L^1}\\
&\leq C.
\end{split}
\end{equation*}

\end{proof}

\begin{proposition}\label{prop:uni-3}
We have $E_\eps-\tilde{E}_\eps\in L^\infty(\R_+,H^1(\RR))$ and
\begin{equation*}
\sup_{t\in \R_+}\|E_\eps(t)-\tilde{E}_\eps\|_{H^1(\RR)}\leq C  \|f_\eps^0\|_{L^\infty}^{1/2}.
\end{equation*}
In particular, for all $q\geq 2$ we have
\begin{equation*}
\sup_{t\in \R_+}\|E_\eps(t)-\tilde{E}_\eps\|_{L^q}\leq C\sqrt{q}  \|f_\eps^0\|_{L^\infty}^{1/2}.
\end{equation*}

\end{proposition}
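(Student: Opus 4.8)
The plan is to reduce the statement to an $L^2$ bound on $\nabla(E_\eps - \tilde E_\eps)$, since the $L^2$ bound on $E_\eps(t) - \tilde E_\eps$ itself is already furnished by Proposition~\ref{prop:e-barre}. Writing $E_\eps(t) - \tilde E_\eps = 2\pi\nabla G\ast(\rho_\eps(t) - \tilde\rho_\eps)$ with $G$ the fundamental solution of the Laplacian on $\RR$, one sees that this field is curl-free and that $\diver\bigl(E_\eps(t) - \tilde E_\eps\bigr) = 2\pi\bigl(\rho_\eps(t) - \tilde\rho_\eps\bigr)$ in $\mathcal{D}'(\RR)$. The first step is the elementary identity, checked on the Fourier side (equivalently, boundedness of the double Riesz transforms $R_jR_k$ on $L^2$), that a curl-free vector field $F\in L^2(\RR;\RR)$ with $\diver F\in L^2$ obeys $\|\nabla F\|_{L^2} = \|\diver F\|_{L^2}$; this applies here precisely because $E_\eps(t) - \tilde E_\eps$ is already known to be square-integrable and $\rho_\eps(t) - \tilde\rho_\eps\in L^2(\RR)$ by Proposition~\ref{prop:uni-2} and \eqref{bound:stationary}. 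It gives $\|\nabla(E_\eps(t) - \tilde E_\eps)\|_{L^2} = 2\pi\|\rho_\eps(t) - \tilde\rho_\eps\|_{L^2}$.

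Next I would bound the right-hand side by $\|\rho_\eps(t)\|_{L^2} + \|\tilde\rho_\eps\|_{L^2}$. By Proposition~\ref{prop:uni-2} one has $\|\rho_\eps(t)\|_{L^2}\le C\|\fe^0\|_{L^\infty}^{1/2}$ uniformly in $t$ and $\eps$, while by \eqref{bound:stationary} the density $\tilde\rho_\eps$ is supported in $B(0,1)$ with $\sup_\eps\|\tilde\rho_\eps\|_{L^\infty}<+\infty$, so that $\|\tilde\rho_\eps\|_{L^2}\le C\le C\|\fe^0\|_{L^\infty}^{1/2}$, the last bound using the normalisation $\|\fe^0\|_{L^\infty}\ge1$. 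Hence $\sup_{t}\|\nabla(E_\eps(t) - \tilde E_\eps)\|_{L^2}\le C\|\fe^0\|_{L^\infty}^{1/2}$, and adding the $L^2$ bound of Proposition~\ref{prop:e-barre} (again absorbing the additive constant with $\|\fe^0\|_{L^\infty}\ge1$) produces the claimed $H^1$ estimate.

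For the $L^q$ statement I would invoke the two-dimensional Gagliardo--Nirenberg inequality in the form $\|u\|_{L^q(\RR)}\le C\sqrt{q}\,\|u\|_{L^2}^{2/q}\|\nabla u\|_{L^2}^{1-2/q}$, valid for every $q\ge2$ with $C$ independent of $q$; applying it to $u=E_\eps(t) - \tilde E_\eps$ and bounding $\|u\|_{L^2}$ and $\|\nabla u\|_{L^2}$ by $\|u\|_{H^1}\le C\|\fe^0\|_{L^\infty}^{1/2}$ gives $\sup_{t}\|E_\eps(t) - \tilde E_\eps\|_{L^q}\le C\sqrt{q}\,\|\fe^0\|_{L^\infty}^{1/2}$.

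No genuine obstacle arises: the proof is a routine assembly of the a priori bounds already established with classical elliptic and Sobolev estimates. The only point deserving care is the $\sqrt{q}$ growth of the Sobolev constant in dimension two, which should be quoted with this exact dependence — it can be extracted by expanding the exponential in the Moser--Trudinger inequality, or from a sharp form of Gagliardo--Nirenberg — and, technically, one should keep in mind that the curl-free/Riesz-transform identity is used on fields already known to be in $L^2(\RR)$, which is exactly what Proposition~\ref{prop:e-barre} guarantees.
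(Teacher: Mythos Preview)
Your proof is correct and follows essentially the same route as the paper: Proposition~\ref{prop:e-barre} for the $L^2$ bound on the field, elliptic regularity (which you make explicit via the curl-free/divergence identity on the Fourier side) combined with Proposition~\ref{prop:uni-2} and \eqref{bound:stationary} for the gradient bound, and the sharp $\sqrt{q}$ Sobolev/Gagliardo--Nirenberg embedding in two dimensions for the $L^q$ estimate. The paper simply invokes ``standard elliptic regularity theory'' and cites the Sobolev constant from Lieb--Loss where you spell out the mechanism, so there is no substantive difference.
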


\begin{proof}
On the one hand, $\|E_\eps(t)-\tilde{E}_\eps\|_{L^2}\leq C$ by virtue of Proposition \ref{prop:e-barre}. On the other hand, standard elliptic regularity theory yields a constant $C>0$  such that 
\begin{equation*}\begin{split}\|\nabla(E_\eps(t)-\tilde{E}_\eps)\|_{L^2}\leq 
 C\|\rho_\eps(t)-\tilde{\rho}_\eps\|_{L^2}
&\leq C \|f_\eps^0\|_{L^\infty}^{1/2},\end{split}\end{equation*}
where we have used Proposition \ref{prop:uni-2} and \eqref{bound:stationary} in the last inequality.

Finaly, the second statement follows from the Sobolev embedding theorem $H^1(\RR)\subset L^q(\RR)$ for all $q\geq 2$, with the dependence of the  constant with respect to $q$ given in, e.g., \cite[Paragraph 8.5, p. 206]{lieb-loss}.
\end{proof}

\subsection{Lagrangian trajectories and weak formulation}

We introduce the field $$b_\eps(t,x,v)=\left(\frac{v}{\eps}, \frac{E_\eps(t,x)}{\eps}+\frac{v^\perp}{\eps^2}\right),$$which satisfies
$$\frac{b_\eps}{1+|x|+|v|}\in L^1_\loc(\R_+, L^1(\R^2\times \R^2))+L^1_\loc(\R_+, L^\infty(\R^2\times \R^2)),$$
see e.g. \cite[Proposition 6.2]{bohun-bouchut-crippa}. 
Moreover, by Proposition \ref{prop:uni-3}, we have\footnote{$Db$ denotes the  differential matrix of $b$ with respect to $x$ and $v$.}
$$D b_\eps \in L^1_\loc(\R_+, L^2_\loc(\R^2\times \R^2)).$$
Therefore, the DiPerna and Lions theory \cite{dip-lions} applies, providing a unique Lagrangian flow associated to $b_\eps$, 
which we denote by $(X_\eps,V_\eps)$. We refer to the recent survey \cite{ambrosio-trevisan} or to \cite{bohun-bouchut-crippa}, which handles specifically the Vlasov-Poisson case.
In particular, for almost every $(x,v)\in \R^2\times \R^2$, $t\mapsto (X_\eps(t,x,v), V_\eps(t,x,v)$ is an absolutely continuous map which satisfies 
\begin{equation}\label{syst:ODE}
\begin{cases}
\displaystyle X_\eps(t,x,v)=x+\frac{1}{\eps}\int_0^t V_\eps(s,x,v)\,ds\\
\displaystyle V_\eps(t,x,v)=v+\frac{1}{\eps^2}\int_0^t \left(V_\eps^\perp(s,x,v)+\eps E_\eps(s,X_\eps(s,x,v))\right)\,ds.
\end{cases}
\end{equation} Moreover, the solution $f_\eps$ is the push-forward\footnote{In view of the support properties of $f_\eps$, this means here that for all 
$\varphi\in L^1_{\loc}(\RR \times \RR)$, we have $\iint f_\eps(t,x,v)\varphi(x,v)\,dx \,dv=\iint f_\eps^0(x,v)\varphi(X_\eps(t,x,v),V_\eps(t,x,v))\,dx\,dv$.} of the initial density $f_\eps^0$ by the flow,
\begin{equation}
\label{push-forward}
\fe(t)=(X_\eps(t), V_\eps(t))_\#f_\eps^0.\end{equation}Recalling that $\rho_\eps$ belongs to $L^\infty_\loc(\R_+,L^\infty(\RR))$ for all $0<\eps<1$, we infer that $E_\eps$ satisfies
\begin{equation*}\begin{split}\forall T>0,\quad &\sup_{t\in [0,T]}\|E_\eps(t)\|_{L^\infty}\leq C({\eps,T}),\\
&\sup_{t\in [0,T]}|E_\eps(t,x)-E_\eps(t,y)|\leq 
C({\eps,T})|x-y|(1+|\ln |x-y||)\end{split}\end{equation*} (see e.g. \cite[Lemma 4]{LP}). Thus it turns out that 
for all $(x,v)\in \RR \times \RR$ the map $t\mapsto (X_\eps(t,x,v),V_\eps(t,x,v))$ belongs to $W^{1,\infty}(\R_+,\RR\times \RR)$ and is the unique solution to the ODE \eqref{syst:ODE}.

\medskip

We define then the following combination of the characteristics:
$$Z_\eps(t,x,v)=X_\eps(t,x,v)+\eps V_\eps^\perp(t,x,v).$$ 

\begin{proposition}\label{prop:ODE-Z}
For all $(x,v)\in \R^2\times \R^2$, the map $t\mapsto Z_\eps(t,x,v)$ belongs to $W^{1,\infty}(\R_+,\RR)$ and it satisfies
$$\dot{Z}_\eps(t,x,v)=E_\eps^\perp(t,X_\eps(t,x,v)),\quad \text{for a.e. }t\in \R_+.$$

\end{proposition}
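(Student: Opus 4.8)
The plan is to obtain the identity by differentiating the integral equations \eqref{syst:ODE} directly in $t$. By the discussion preceding the statement, for every fixed $(x,v)$ the map $t\mapsto(X_\eps(t,x,v),V_\eps(t,x,v))$ belongs to $W^{1,\infty}(\R_+,\RR\times\RR)$ and solves \eqref{syst:ODE} in the Carath\'eodory sense, so that for a.e. $t\in\R_+$
\[
\dot{X}_\eps=\frac{V_\eps}{\eps},\qquad \dot{V}_\eps=\frac{V_\eps^\perp}{\eps^2}+\frac{E_\eps(t,X_\eps)}{\eps}.
\]
Since $Z_\eps=X_\eps+\eps V_\eps^\perp$ is a linear, hence Lipschitz, combination of $W^{1,\infty}$ maps, it too belongs to $W^{1,\infty}(\R_+,\RR)$; alternatively, once the formula for $\dot Z_\eps$ below is established, the local bound on $E_\eps$ on $[0,T]$ recorded just before the statement gives $\dot Z_\eps\in L^\infty_\loc(\R_+)$, which together with continuity of $Z_\eps$ yields the same conclusion.

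For the ODE itself I would simply compute, using that rotation by $\pi/2$ is linear and satisfies $(w^\perp)^\perp=-w$ for every $w\in\R^2$, so that differentiation commutes with $\perp$:
\[
\dot{Z}_\eps=\dot{X}_\eps+\eps\,(\dot{V}_\eps)^\perp
=\frac{V_\eps}{\eps}+\eps\Bigl(\frac{(V_\eps^\perp)^\perp}{\eps^2}+\frac{E_\eps^\perp(t,X_\eps)}{\eps}\Bigr)
=\frac{V_\eps}{\eps}-\frac{V_\eps}{\eps}+E_\eps^\perp(t,X_\eps).
\]
The two $V_\eps/\eps$ terms cancel, leaving $\dot Z_\eps(t,x,v)=E_\eps^\perp(t,X_\eps(t,x,v))$ for a.e. $t$, as claimed.

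There is no serious obstacle here: the only points requiring care are the justification that the integral system \eqref{syst:ODE} may be differentiated pointwise in $t$ — which is exactly the $W^{1,\infty}$ regularity of the characteristics recalled just before the statement, itself a consequence of the log-Lipschitz and $L^\infty$ bounds on $E_\eps$ — and the elementary sign bookkeeping for $\perp$. The cancellation of the singular $1/\eps$ terms is precisely the algebraic reason for introducing the variable $Z_\eps$: the velocity is slaved to $X_\eps$ only through the field $E_\eps^\perp$, which is what will subsequently make it possible to pass to the limit $\eps\to0$ in the weak formulation and to identify the equation satisfied by the shifted density in Proposition \ref{prop:rho-decale}.
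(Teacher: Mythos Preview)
Your proof is correct and follows exactly the same route as the paper: differentiate $Z_\eps=X_\eps+\eps V_\eps^\perp$ using the characteristic equations and observe that the $V_\eps/\eps$ terms cancel, leaving $E_\eps^\perp(t,X_\eps)$. The paper's proof is the same one-line computation, with the $W^{1,\infty}$ regularity already noted beforehand.
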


\begin{proof}We have for a.e. $t\in \R_+$
\begin{equation*}
\begin{split}
\dot{Z}_\eps(t)&=\frac{V_\eps(t)}{\eps}+\eps\left(\frac{V_\eps^\perp(t)+\eps E_\eps(t,X_\eps(t))}{\eps^2}\right)^\perp=E_\eps^\perp(t,X_\eps(t)).
\end{split}
\end{equation*}
\end{proof}
We can now derive a weak formulation for the spatial density.
\begin{proposition}\label{prop:weak-formulation-1}
Let $\Phi \in C_c^\infty(\R_+\times \R^2)$. We have 
\begin{equation*}
\begin{split}
\int_{\R^2} \rho_\eps(t,x)\Phi(t,x)\,dx&-\int_{\R^2} \rho_\eps^0(x)\Phi(0,x)\,dx=\int_0^t\int_{\RR}\partial_s \Phi(s,x)\rho_\eps(s,x)\,dx\,ds \\
&+\int_0^t \iint_{\R^2\times \RR}H_{\Phi(s,\cdot)}(x,y)\rho_\eps(s,x)\rho_\eps(s,y)\,dx\,dy\,ds+{R}_\eps(t),
\end{split}
\end{equation*}
where ${R}_\eps$ converges to zero locally uniformly on $\R_+$ as $\eps\to 0$. More precisely,
\begin{equation*}
 |R_\eps(t)|\leq C(1+t) \|\Phi\|_{L^\infty(W^{2,\infty})}\left(\eps^2 \Theta(\|f_\eps^0\|_{L^\infty})\right)^{1/2}.
\end{equation*}

\end{proposition}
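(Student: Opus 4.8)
\emph{Proof plan.} The idea is to integrate the ODE of Proposition~\ref{prop:ODE-Z} along the Lagrangian trajectories, test it against the initial density, and then recover $\rho_\eps$ from the push-forward identity \eqref{push-forward} by replacing the combined characteristic $Z_\eps$ with $X_\eps$; the cost of these replacements will be exactly $R_\eps$. (This amounts to working with the shifted density $f_\eps(t,x-\eps v^\perp,v)$, as announced in the introduction.)

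\emph{Step 1 (an exact identity).} Fix $\Phi\in C_c^\infty(\R_+\times\R^2)$. For a.e.\ $(x,v)$ the map $s\mapsto Z_\eps(s,x,v)$ is Lipschitz with $\dot Z_\eps(s,x,v)=E_\eps^\perp(s,X_\eps(s,x,v))$ by Proposition~\ref{prop:ODE-Z}; differentiating $s\mapsto\Phi(s,Z_\eps(s,x,v))$, integrating on $[0,t]$, and using $Z_\eps(0,x,v)=x+\eps v^\perp$, one obtains
\[
\Phi(t,Z_\eps(t,x,v))-\Phi(0,x+\eps v^\perp)=\int_0^t\Big[(\partial_s\Phi)(s,Z_\eps(s,x,v))+\nabla\Phi(s,Z_\eps(s,x,v))\cdot E_\eps^\perp(s,X_\eps(s,x,v))\Big]\,ds.
\]
Integrating this against $f_\eps^0(x,v)\,dx\,dv$ and applying Fubini (legitimate for fixed $\eps$, since $\rho_\eps(s)\in L^1\cap L^\infty$ and $E_\eps(s,\cdot)$ is locally bounded, so all integrands are dominated by integrable functions, using Propositions~\ref{prop:uni-2} and \ref{prop:uni-3}) yields an exact identity.

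\emph{Step 2 (recognizing the limit terms).} In this identity I replace $Z_\eps(s,x,v)$ by $X_\eps(s,x,v)$ in every argument of $\Phi$, $\partial_s\Phi$ and $\nabla\Phi$. Since $Z_\eps-X_\eps=\eps V_\eps^\perp$, each such replacement costs, by the mean value inequality, at most $\eps\,\|\Phi\|_{L^\infty(W^{2,\infty})}$ times an integral against $f_\eps^0$ of $|V_\eps(s)|$, weighted in one case by $|E_\eps(s,X_\eps(s))|$. After the replacements, \eqref{push-forward} identifies the four remaining terms as $\int_{\R^2}\rho_\eps(t)\Phi(t)$, $\int_{\R^2}\rho_\eps^0\Phi(0)$, $\int_0^t\int_{\R^2}\rho_\eps(s)\partial_s\Phi(s)$ and $\int_0^t\int_{\R^2}\rho_\eps(s,x)\nabla\Phi(s,x)\cdot E_\eps^\perp(s,x)\,dx\,ds$ (for the last, use \eqref{push-forward} with the test function $(y,w)\mapsto\nabla\Phi(s,y)\cdot E_\eps^\perp(s,y)$ and integrate out $v$); and by the symmetrization identity \eqref{eqi:delort}, valid since $\rho_\eps(s)\in L^1\cap L^\infty$, this last term equals $\int_0^t\iint H_{\Phi(s,\cdot)}(x,y)\rho_\eps(s,x)\rho_\eps(s,y)\,dx\,dy\,ds$. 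This produces the announced formula with $R_\eps(t)$ the sum of the replacement errors.

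\emph{Step 3 (estimating $R_\eps$; the main point).} The errors arising from the $\Phi$, $\partial_s\Phi$ and initial terms are bounded by $\eps\,\|\Phi\|_{L^\infty(W^{2,\infty})}$ times $\|J_\eps(0)\|_{L^1}+\|J_\eps(t)\|_{L^1}+\int_0^t\|J_\eps(s)\|_{L^1}\,ds$, hence by $C(1+t)\,\eps\,\|\Phi\|_{L^\infty(W^{2,\infty})}$ by Proposition~\ref{prop:uni-2}. The remaining error, from the field term, is at most $\eps\,\|\Phi\|_{L^\infty(W^{2,\infty})}\int_0^t\iint f_\eps^0|V_\eps(s)|\,|E_\eps(s,X_\eps(s))|\,dx\,dv\,ds=\eps\,\|\Phi\|_{L^\infty(W^{2,\infty})}\int_0^t\int_{\R^2}J_\eps(s,x)|E_\eps(s,x)|\,dx\,ds$, again by \eqref{push-forward}. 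To control $\int_{\R^2}J_\eps|E_\eps|\,dx$ I split $E_\eps=(E_\eps-\tilde{E}_\eps)+\tilde{E}_\eps$, apply H\"older with conjugate exponents $q,q'$, and use Proposition~\ref{prop:uni-3} (giving $\|E_\eps(s)-\tilde{E}_\eps\|_{L^q}\le C\sqrt q\,\|f_\eps^0\|_{L^\infty}^{1/2}$), the elementary bound $\sup_{q\ge3}\|\tilde{E}_\eps\|_{L^q}\le C$ from \eqref{bound:stationary}, and the interpolation $\|J_\eps(s)\|_{L^{q'}}\le\|J_\eps(s)\|_{L^1}^{1-4/q}\|J_\eps(s)\|_{L^{4/3}}^{4/q}\le C\|f_\eps^0\|_{L^\infty}^{1/q}$ from Proposition~\ref{prop:uni-2}. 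Choosing $q\simeq\ln(\|f_\eps^0\|_{L^\infty}+2)$ (which is $\ge4$ after adding a fixed constant, and bounded below since $\|f_\eps^0\|_{L^\infty}\ge1$) makes $\|f_\eps^0\|_{L^\infty}^{1/q}$ bounded and leaves
\[
\int_{\R^2}J_\eps(s,x)\,|E_\eps(s,x)|\,dx\le C\sqrt{\ln(\|f_\eps^0\|_{L^\infty}+2)}\;\|f_\eps^0\|_{L^\infty}^{1/2}=C\sqrt{\Theta(\|f_\eps^0\|_{L^\infty})},
\]
so the field error is $\le Ct\,\|\Phi\|_{L^\infty(W^{2,\infty})}\,(\eps^2\Theta(\|f_\eps^0\|_{L^\infty}))^{1/2}$. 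Since $\Theta(\|f_\eps^0\|_{L^\infty})\ge\Theta(1)>1$ one has $\eps\le(\eps^2\Theta(\|f_\eps^0\|_{L^\infty}))^{1/2}$, so all the errors combine into the claimed bound, and $R_\eps\to0$ locally uniformly by \eqref{hyp:ini-2}. The delicate point — and the heart of the improvement over \cite{SR-02} — is exactly this last estimate: one must tune the Lebesgue exponent to be logarithmic in $\|f_\eps^0\|_{L^\infty}$ so that the $\sqrt q$ loss of the Sobolev embedding $H^1\hookrightarrow L^q$ recombines with $\|f_\eps^0\|_{L^\infty}^{1/2}$ into precisely $\sqrt{\Theta(\|f_\eps^0\|_{L^\infty})}$; everything else is bookkeeping with the mean value inequality and the push-forward formula.
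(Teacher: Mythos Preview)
Your proof is correct and follows essentially the same route as the paper's: you integrate the ODE for $Z_\eps$ against $f_\eps^0$, replace $Z_\eps$ by $X_\eps$ via the mean value inequality, identify the resulting terms through the push-forward formula \eqref{push-forward}, symmetrize the field term, and then control the key error $\int J_\eps|E_\eps|$ by splitting off $\tilde{E}_\eps$, applying H\"older with an exponent $q\sim\ln\|f_\eps^0\|_{L^\infty}$, and invoking Propositions~\ref{prop:uni-2}--\ref{prop:uni-3}. The only cosmetic differences are the order of the replacements (the paper first passes from $X_\eps$ to $Z_\eps$ at time $t$, then back to $x+\eps v^\perp$ after push-forward, whereas you do all replacements at the level of the characteristics) and the treatment of the $\tilde{E}_\eps$ piece (the paper uses $\|\tilde{E}_\eps\|_{L^\infty}$ against $\|J_\eps\|_{L^1}$ rather than your $\|\tilde{E}_\eps\|_{L^q}$ bound), but these are immaterial.
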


\begin{proof} Thanks to \eqref{push-forward}, we may write
\begin{equation*}
\begin{split}
\iint_{\R^2\times \R^2} \fe(t,x,v)\Phi(t,x)\,dx\,dv
&=\iint_{\R^2\times \R^2} \fe^0(x,v)\Phi(t,Z_\eps(t,x,v))\,dx\,dv+R_{\eps,1}(t),
\end{split}
\end{equation*}
where
\begin{equation*}
R_{\eps,1}(t)=\iint_{\R^2\times \R^2} \fe^0(x,v)\left( \Phi(t,X_\eps(t,x,v))-\Phi(t,Z_\eps(t,x,v))
\right)\,dx\,dv.
\end{equation*}

On the one hand, we have by the mean-value theorem
\begin{equation*}
|R_{\eps,1}(t)|\leq \|D \Phi(t)\|_{L^\infty}\eps \iint_{\R^2\times \R^2} \fe^0(x,v)|V_\eps(t,x,v)|\,dx\,dv
\end{equation*}
hence using \eqref{push-forward} and Proposition \ref{prop:uni} we get
\begin{equation*}
\sup_{t\in \R_+} |R_{\eps,1}(t)|\leq C \,\eps\|D \Phi(t)\|_{L^\infty}.
\end{equation*}

On the other hand, Proposition \ref{prop:ODE-Z} implies that for all $(x,v)\in \RR\times \RR$, the map $t\mapsto \Phi(t,Z_\eps(t,x,v))$ belongs to $W^{1,\infty}(\R_+)$ therefore
\begin{equation*}
\begin{split}
&\iint_{\R^2\times \R^2} \fe^0(x,v)\Phi(t,Z_\eps(t,x,v))\,dx\,dv\\&= \iint_{\R^2\times \R^2} \fe^0(x,v)\Phi(0,Z_\eps(0,x,v))\,dx\,dv\\
&+\iint_{\R^2\times \R^2} \fe^0(x,v)\int_0^t \frac{d}{ds}\Phi(s,Z_\eps(s,x,v))\,ds\:dx\,dv\\
&=\iint_{\R^2\times \R^2} \fe^0(x,v)\Phi(0,x+\eps v^\perp)\,dx\,dv\\
&+\iint_{\R^2\times \R^2} \fe^0(x,v)\int_0^t\partial_s \Phi(s,Z_\eps(s,x,v))\,ds\:dx\,dv\\
&+\iint_{\R^2\times \R^2} \fe^0(x,v)\int_0^t\nabla \Phi(s,Z_\eps(s,x,v))\cdot E_\eps^\perp(s,X_\eps(s,x,v))\,ds\:dx\,dv.\end{split}\end{equation*}
Using again \eqref{push-forward}, we obtain
\begin{equation*}
\begin{split}
&\iint_{\R^2\times \R^2} \fe^0(x,v)\Phi(t,Z_\eps(t,x,v))\,dx\,dv\\
&=\iint_{\R^2\times \R^2} \fe^0(x,v)\Phi(0,x+\eps v^\perp)\,dx\,dv\\
&+\int_0^t \iint_{\R^2\times \R^2} \fe(s,x,v)\partial_s \Phi(s,x+\eps v^\perp)\,ds\:dx\,dv\\
&+\int_0^t \, \iint_{\R^2\times \R^2} \fe(s,x,v)\nabla \Phi(s,x+\eps v^\perp )\cdot E_\eps^\perp(s,x)\:dx\,dv\:ds.
\end{split}
\end{equation*}
Therefore, we have
\begin{equation*}
\begin{split}
&\iint_{\R^2\times \R^2} \fe^0(x,v)\Phi(t,Z_\eps(t,x,v))\,dx\,dv\\
&=\iint_{\R^2\times \R^2} \fe^0(x,v)\Phi(0,x)\,dx\,dv
+\int_0^t \iint_{\R^2\times \R^2} \fe(s,x,v)\partial_s \Phi(s,x)\,ds\:dx\,dv\\
&+\int_0^t \, \iint_{\R^2\times \R^2} \fe(s,x,v)\nabla \Phi(s,x )\cdot E_\eps^\perp(s,x)\:dx\,dv\:ds+\sum_{k=2}^5R_{\eps,k}(t),
\end{split}
\end{equation*}
where
\begin{equation*}
\begin{split}
R_{\eps,2}(t)&=\iint_{\R^2\times \R^2} \fe^0(x,v)\left(\Phi(0,x+\eps v^\perp)-\Phi(0,x)\right)\,dx\,dv,\\
R_{\eps,3}(t)&=\int_0^t \iint_{\R^2\times \R^2} \fe(s,x,v)\left(\partial_s \Phi(s,x+\eps v^\perp)-\partial_s \Phi(s,x)\right)\,dx\,dv\,ds,\\
R_{\eps,4}(t)&=\int_0^t \, \iint_{\R^2\times \R^2} \fe(s,x,v)\left(\nabla \Phi(s,x+\eps v^\perp)-\nabla \Phi(s,x )\right)\cdot (E_\eps^\perp(s,x)-\tilde{E}_\eps^\perp(x))\,dx\,dv\,ds,\\
R_{\eps,5}(t)&=\int_0^t \, \iint_{\R^2\times \R^2} \fe(s,x,v)\left(\nabla \Phi(s,x+\eps v^\perp)-\nabla \Phi(s,x )\right)\cdot  \tilde{E}_\eps^\perp(x)\,dx\,dv\,ds.
\end{split}
\end{equation*}

On the one hand, inserting the definition of $E_\eps$ and symmetrizing as in \cite{Schochet}, we get 
\begin{equation*}
 \int_{\R^2} \rho_\eps(s,x)\nabla \Phi(s,x )\cdot E_\eps^\perp(s,x)\:dx=\iint_{\RR\times\RR} H_{\Phi(s,\cdot)}(x,y)\rho_\eps(s,x)\rho_\eps(s,y)\,dx\,dy.
\end{equation*}

On the other hand, as before, we  obtain
\begin{equation*}
 |R_{\eps,2}(t)|\leq C \eps  \|\nabla  \Phi(0)\|_{L^\infty}.
\end{equation*}

Besides, Proposition \ref{prop:uni-3} yields
\begin{equation*}\begin{split}
 |R_{\eps,3}(t)|&\leq C \eps t  \|  D\partial_s \Phi\|_{L^\infty(L^\infty)}\sup_{s\in \R_+}\iint_{\RR \times \RR}|v|f_\eps(s,x,v)\,dx\,dv\leq Ct\eps\|D^2\Phi\|_{L^\infty(L^\infty)}.
 \end{split}
\end{equation*}

Next, we infer from the mean-value theorem, H\"older inequality and Proposition \ref{prop:uni-3} that
\begin{equation*}
\begin{split}
|R_{\eps,4}(t)|&\leq \eps \, \|D^2\Phi\|_{L^\infty(L^\infty)}\int_0^t \iint_{\R^2\times \R^2}f_\eps(s,x,v)|v||E_\eps(s,x)-\tilde{E}_\eps(x)|\,dx\,dv\,ds\\
&\leq Ct \eps \, \|D^2\Phi\|_{L^\infty(L^\infty)}\sup_{s\in [0,t]}\left(\|E_\eps(s)-\tilde{E}_\eps\|_{L^q} \|J_\eps(s)\|_{L^{q'}}\right)\\
&\leq  Ct \eps \, \|D^2\Phi\|_{L^\infty(L^\infty)}\sqrt{q}\|f_\eps^0\|_{L^\infty}^{1/2}\sup_{s\in [0,t]}\|J_\eps(s)\|_{L^{q'}},
\end{split}
\end{equation*}
where $q'$ is the conjugate exponent of $q$, and where $q\geq 4$ will be chosen later. Since $q'\in(1,4/3]$, we have
$$\|J_\eps(s)\|_{L^{q'}}\leq \|J_\eps(s)\|_{L^1}^{1-\frac{4}{q}}\|J_\eps(s)\|_{L^{4/3}}^{\frac{4}{q}},$$
thus Proposition \ref{prop:uni-2} yields
\begin{equation*}
\begin{split}
|R_{\eps,4}(t)|
&\leq  Ct \eps \, \|D^2\Phi\|_{L^\infty(L^\infty)}\sqrt{q}\|f_\eps^0\|_{L^\infty}^{\frac{1}{2}+\frac{1}{q}}.
\end{split}
\end{equation*}

Finally, we set 
$$q=\max(4,\ln (\|\fe^0\|_{L^\infty})),$$
so that
\begin{equation*}
\begin{split}
|R_{\eps,4}(t)|
&\leq  Ct \eps \, \|D^2\Phi\|_{L^\infty(L^\infty)}\Theta\left(\|f_\eps^0\|_{L^\infty}\right)^{1/2}.
\end{split}
\end{equation*}

We turn to the last term. We infer from \eqref{bound:stationary} and from classical potential estimates, see e.g. \cite{majda-bertozzi}, that
\begin{equation*}
 \sup_{\eps>0}\|\tilde{E}_\eps\|_{L^\infty}\leq C\sup_{\eps>0}\|\tilde{\rho}_\eps\|_{L^1}^{1/2}\|\tilde{\rho}_\eps\|_{L^\infty}^{1/2}\leq C,
\end{equation*}
therefore
\begin{equation*}
 \begin{split}
|R_{\eps,5}(t)|&\leq C t \eps\, \|D^2\Phi\|_{L^\infty(L^\infty)} \|\tilde{E}_\eps\|_{L^\infty}\sup_{s\in [0,t]}
\int_{\RR\times \RR}|v|f_\eps(s,x,v)\,dx\,dv\\&\leq Ct\eps\|D^2\Phi\|_{L^\infty(L^\infty)}.  
 \end{split}
\end{equation*}

Gathering the previous bounds and recalling that $\Theta(\|f_\eps^0\|_{L^\infty})\geq 1$, we obtain the desired estimate.

\end{proof}

\subsection{Passing to the limit}\label{subsec:limit}We establish a property of uniform equicontinuity with respect to time for the spatial densities.
\begin{lemma}\label{lemma:equicontinuity}There exists $K_0>0$ such that for all $s,t\in \R_+$, 
 \begin{equation*}
  \|\rho_\eps(t)-\rho_\eps(s)\|_{W^{-2,1}(\RR)}\leq K_0\left(|t-s|+(1+t+s)\eps\Theta(\|f_\eps^0\|_{L^\infty})^{1/2}\right).
 \end{equation*}

\end{lemma}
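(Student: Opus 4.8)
The plan is to apply the weak formulation of Proposition~\ref{prop:weak-formulation-1} with a time-independent test function, so that the $\partial_s\Phi$ term drops out. Fix $\Phi\in C_c^\infty(\RR)$ with $\|\Phi\|_{W^{2,\infty}}\le 1$, viewed as a function on $\R_+\times\RR$ that is constant in time; then Proposition~\ref{prop:weak-formulation-1} gives, for $s\le t$,
\begin{equation*}
\int_{\RR}\bigl(\rho_\eps(t,x)-\rho_\eps(s,x)\bigr)\Phi(x)\,dx=\int_s^t\iint_{\RR\times\RR}H_\Phi(x,y)\rho_\eps(\sigma,x)\rho_\eps(\sigma,y)\,dx\,dy\,d\sigma+\bigl(R_\eps(t)-R_\eps(s)\bigr),
\end{equation*}
where $R_\eps(t)$ and $R_\eps(s)$ are the remainders for the respective endpoints. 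First I would bound the double-integral term: since $\|H_\Phi\|_{L^\infty(\RR\times\RR)}\le\|\Phi\|_{W^{2,\infty}}\le 1$ (the bound recalled after Definition~\ref{def:delort}) and $\|\rho_\eps(\sigma)\|_{L^1}=\|f_\eps(\sigma)\|_{L^1}\le\|f_\eps^0\|_{L^1}\le C$ by Proposition~\ref{prop:uni}, this term is at most $C|t-s|$, uniformly in $\eps$. Next I would control the remainder difference using the explicit estimate from Proposition~\ref{prop:weak-formulation-1}, namely $|R_\eps(\tau)|\le C(1+\tau)\|\Phi\|_{L^\infty(W^{2,\infty})}(\eps^2\Theta(\|f_\eps^0\|_{L^\infty}))^{1/2}$, so that $|R_\eps(t)-R_\eps(s)|\le C(1+t+s)\eps\Theta(\|f_\eps^0\|_{L^\infty})^{1/2}$.

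Combining the two bounds yields
\begin{equation*}
\left|\int_{\RR}\bigl(\rho_\eps(t)-\rho_\eps(s)\bigr)\Phi\,dx\right|\le C\Bigl(|t-s|+(1+t+s)\eps\Theta(\|f_\eps^0\|_{L^\infty})^{1/2}\Bigr)
\end{equation*}
for every $\Phi\in C_c^\infty(\RR)$ with $\|\Phi\|_{W^{2,\infty}}\le1$. Taking the supremum over such $\Phi$ gives exactly the $W^{-2,1}$ norm of $\rho_\eps(t)-\rho_\eps(s)$, with $K_0$ the resulting constant; a density argument extends the test functions from $C_c^\infty$ to $W^{2,1}_0$ if one wants the literal dual-norm characterization. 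One small technical point to check is that Proposition~\ref{prop:weak-formulation-1} is stated for $\Phi\in C_c^\infty(\R_+\times\RR)$ and a time-independent $\Phi$ is not compactly supported in the $t$-variable; but since all the integrals run only over the finite interval $[0,t]$ and $f_\eps$ is compactly supported in $x$, one may multiply by a cutoff in time equal to $1$ on $[0,t]$ and pass to the limit, or simply observe that the proof of Proposition~\ref{prop:weak-formulation-1} only uses $\Phi$ on $[0,t]\times\RR$.

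I do not expect a serious obstacle here: the statement is essentially a repackaging of Proposition~\ref{prop:weak-formulation-1} once one recognizes that the $W^{-2,1}$ norm is tailored precisely to the $\|\Phi\|_{W^{2,\infty}}$ bound appearing in the remainder estimate and to the $L^\infty$ bound on $H_\Phi$. The only mild care needed is the bookkeeping of the time-dependence of the constants (writing $(1+t)+(1+s)\le 2(1+t+s)$ and absorbing everything into a single $K_0$) and, as noted, the harmless extension of the class of admissible test functions. This equicontinuity estimate is what will subsequently feed an Ascoli–Arzelà argument to upgrade weak-$\ast$ compactness of $(\rho_\eps)$ at fixed times to convergence in $C(\R_+,\mathcal{M}^+(\RR)-w^\ast)$.
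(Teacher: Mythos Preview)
Your argument is correct and is precisely the approach the paper takes: the paper's own proof is the single sentence ``This is a simple consequence of Proposition~\ref{prop:weak-formulation-1} and of the estimate $|H_\Phi(x,y)|\le\|\Phi\|_{W^{2,\infty}}$,'' and you have spelled out exactly those two ingredients. Your remarks on the time-cutoff issue and on the identification of the $W^{-2,1}$ norm via duality with $W^{2,\infty}$ test functions are the appropriate technical caveats and do not differ from what the paper implicitly assumes.
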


\begin{proof}This is a simple consequence of Proposition \ref{prop:weak-formulation-1} and of the estimate $|H_{\Phi}(x,y)|\leq \|\Phi\|_{W^{2,\infty}}$.

\end{proof}
We are now in position to complete the proof of Theorem \ref{thm:main}. A straightforward adaptation of Ascoli's theorem yields:
\begin{lemma}Let $T>0$. Let $(F,d)$ be a complete metric space.
 Let $(f_n)_{n\in \N}$ be a family of $C([0,T], F)$ such that 
 \begin{enumerate}
  \item For all $t\in [0,T]$, $(f_n(t))_{n\in \N}$ is relatively compact in $F$;
  \item There exists $C>0$ and a sequence $r_n\to 0$ as $n\to +\infty$ such that for all $t,s\in [0,T]$, for all $n\in \mathbb{N}$, 
  $|f_n(t)-f_n(s)|\leq C|t-s|+r_n$.
 \end{enumerate}Then the family $(f_n)_{n\in \N}$ is relatively compact in $C([0,T],F)$.
\end{lemma}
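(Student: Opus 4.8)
The plan is to run the classical Arzel\`a--Ascoli argument, with the usual equicontinuity hypothesis replaced by the bound in (2); the sequence $r_n$ plays the role of a vanishing ``equicontinuity defect'' that is uniform in $t,s$ and $n$. I will prove sequential compactness: any sequence in the family admits a subsequence converging in $C([0,T],F)$ for the uniform metric $d_\infty(f,g)=\sup_{t\in[0,T]}d(f(t),g(t))$.

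First I would fix a countable dense subset $D=\{t_k\}_{k\in\mathbb{N}}$ of $[0,T]$, say $D=\mathbb{Q}\cap[0,T]$. By hypothesis (1), for each $k$ the set $\{f_n(t_k):n\in\mathbb{N}\}$ is relatively compact in $F$, so a diagonal extraction produces a subsequence $(f_{n_j})_{j\in\mathbb{N}}$ such that $\big(f_{n_j}(t_k)\big)_{j}$ converges in $F$ for every $k$.

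Next I would show that $(f_{n_j})_j$ is Cauchy in $\big(C([0,T],F),d_\infty\big)$. Given $\varepsilon>0$, choose $\delta>0$ with $C\delta<\varepsilon/5$ and a finite subset $\{t_{k_1},\dots,t_{k_m}\}\subset D$ that is $\delta$-dense in $[0,T]$ (possible since $[0,T]$ is totally bounded and $D$ is dense). Then pick $N$ so large that $r_{n_j}<\varepsilon/5$ for all $j\ge N$ and $d\big(f_{n_j}(t_{k_i}),f_{n_l}(t_{k_i})\big)<\varepsilon/5$ for all $j,l\ge N$ and all $i=1,\dots,m$. For an arbitrary $t\in[0,T]$, picking $i$ with $|t-t_{k_i}|<\delta$ and applying the bound in (2) twice gives, for $j,l\ge N$,
\begin{equation*}
d\big(f_{n_j}(t),f_{n_l}(t)\big)\le \big(C\delta+r_{n_j}\big)+d\big(f_{n_j}(t_{k_i}),f_{n_l}(t_{k_i})\big)+\big(C\delta+r_{n_l}\big)<\varepsilon,
\end{equation*}
so $d_\infty(f_{n_j},f_{n_l})\le\varepsilon$.

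Finally, since $(F,d)$ is complete, $\big(C([0,T],F),d_\infty\big)$ is complete (a uniform limit of continuous maps valued in a metric space is continuous), hence $(f_{n_j})_j$ converges in $C([0,T],F)$, which yields the relative compactness of the family. There is no real obstacle here; the only point deserving attention is that the estimate in (2) is uniform in $t,s$ and $n$, which is precisely what allows the single null sequence $r_n$ to absorb the lack of genuine equicontinuity in the three-term estimate above.
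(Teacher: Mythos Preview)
Your proof is correct and is precisely the ``straightforward adaptation of Ascoli's theorem'' the paper alludes to; the paper itself does not spell out a proof of this lemma, so there is nothing further to compare.
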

Using the fact that $(\rho_\eps)_{\eps>0}$ is uniformly bounded in $L^\infty(\R_+, \mathcal{M}^+(\RR))$ and recalling Lemma \ref{lemma:equicontinuity}, 
we can apply this Lemma to $F=W^{-2,1}(\RR)$ and we can mimick the proof of Lemma 3.2 in \cite[Lemma 3.2]{Schochet} to show that there exists $\eps_n\to 0$ as $n\to +\infty$ such that $(\rho_{\eps_n})_{n\in \N}$ converges to some $\rho$ in $C(\R_+,\mathcal{M}^+(\RR)-w^\ast)$. By Proposition \ref{prop:e-barre}, $(\rho_{\eps_n})_{n\in \N}$ is bounded in $L^\infty(\R_+,H^{-1}(\RR))$. It was proved 
in \cite{Delort} (see also \cite{majda-93,Schochet}) that this implies that
the non-linear term $$\int \iint_{} H_{\Phi}(x,y)\rho_{\eps_n}(s,x)\rho_{\eps_n}(s,y)\,dx\,dy\,ds$$ converges to $$\int \iint_{} H_{\Phi}(x,y)\rho(s,x)\rho(s,y)\,dx\,dy\,ds$$ as $n\to +\infty$
for all test function $\Phi$. On the other hand, all linear terms appearing in the formulation given by Proposition \ref{prop:weak-formulation-1} pass to the limit. This means that $\rho$ satisfies the conclusion of Theorem \ref{thm:main}.

\subsection{Alternative proof of Theorem \ref{thm:main} without Lagrangian trajectories}The purpose of this paragraph is to propose 
another proof of Theorem \ref{thm:main}, for smooth solutions, that does not rely on the characteristics. Here, we assume that the initial data $f_\eps^0$ satisfy the assumptions of Theorem \ref{thm:main} 
and that moreover $$f_\eps^0\in C^{1,\alpha}
(\RR\times \RR)$$ for some $\alpha\in (0,1)$. The corresponding solution to \eqref{syst:VP} then belongs to $C^1(\R_+\times \RR\times \RR)$. 

As in \cite{ghn}, we consider the microscopic and macroscopic densities in the gyro-coordinates:
$$\overline{\fe}(t,x,v)=\fe(t,x-\eps v^\perp,v),\quad \overline{\rho}_\eps(t,x)=\int_{\RR} \overline{\fe}(t,x,v)\,dv. $$
\begin{proposition}\label{prop:rho-decale}We have
\begin{equation*}\label{eq:decale}
\partial_t \overline{\fe}+E_\eps^\perp(t,x-\eps v^\perp)\cdot  \nabla_x \overline{\fe}
+\left(\frac{v^\perp}{\eps^2}+\frac{E_\eps(t,x-\eps v^\perp)}{\eps}\right)\cdot \nabla_v \overline{\fe}=0,
\end{equation*}
and
\begin{equation*}\label{eq:rho-decale}
\partial_t \overline{\rho}_\eps+\nabla_x \cdot \left( \int_{\RR} E_\eps^\perp(t,x-\eps v^\perp) \overline{\fe} \,dv\right)=0.
\end{equation*}
\end{proposition}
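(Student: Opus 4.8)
The plan is to derive the equation for $\overline{\fe}$ by a direct change of variables in the transport equation \eqref{syst:VP}, and then to obtain the equation for $\overline{\rho}_\eps$ by integrating in $v$. Since we are working with $C^1$ solutions, everything is classical and the only point requiring care is the chain rule bookkeeping for the shift $x\mapsto x-\eps v^\perp$, which couples the $x$ and $v$ variables.

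First I would fix notation: write $\overline{\fe}(t,x,v)=\fe(t,x-\eps v^\perp,v)$ and compute the three derivatives appearing in the target equation. For the time derivative, $\partial_t\overline{\fe}(t,x,v)=(\partial_t\fe)(t,x-\eps v^\perp,v)$. For the spatial gradient, $\nabla_x\overline{\fe}(t,x,v)=(\nabla_x\fe)(t,x-\eps v^\perp,v)$. The subtle one is the velocity gradient: by the chain rule, differentiating $x-\eps v^\perp$ in $v$ produces the constant antisymmetric matrix $J$ with $Jv=v^\perp$ (more precisely $\partial_{v_1}(x-\eps v^\perp)=\eps\,(0,1)$ up to sign, etc.), so that $\nabla_v\overline{\fe}(t,x,v)=(\nabla_v\fe)(t,x-\eps v^\perp,v)-\eps\,\big[(\nabla_x\fe)(t,x-\eps v^\perp,v)\big]^\perp$, where the $\perp$ acts on the gradient vector in the appropriate sense. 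I would double-check the sign and which perp is which by testing on a linear function, but morally this is the only computation in the proof.

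Next I would substitute. Evaluating \eqref{syst:VP} at the point $(t,x-\eps v^\perp,v)$ gives
\[
(\partial_t\fe)+\frac{v}{\eps}\cdot(\nabla_x\fe)+\Big(\frac{E_\eps(t,x-\eps v^\perp)}{\eps}+\frac{v^\perp}{\eps^2}\Big)\cdot(\nabla_v\fe)=0,
\]
all derivatives evaluated at $(t,x-\eps v^\perp,v)$. Now replace $(\partial_t\fe)$, $(\nabla_x\fe)$, $(\nabla_v\fe)$ by their expressions in terms of derivatives of $\overline{\fe}$ from the previous step. The term $\big(\frac{E_\eps}{\eps}+\frac{v^\perp}{\eps^2}\big)\cdot(\nabla_v\fe)$ becomes $\big(\frac{E_\eps(t,x-\eps v^\perp)}{\eps}+\frac{v^\perp}{\eps^2}\big)\cdot\big(\nabla_v\overline{\fe}+\eps[\nabla_x\overline{\fe}]^\perp\big)$, and the correction splits off a term $\big(E_\eps(t,x-\eps v^\perp)+\frac{v^\perp}{\eps}\big)\cdot[\nabla_x\overline{\fe}]^\perp$. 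Using $a\cdot b^\perp=-a^\perp\cdot b$ this rearranges to $-E_\eps^\perp(t,x-\eps v^\perp)\cdot\nabla_x\overline{\fe}-\frac{v}{\eps}\cdot\nabla_x\overline{\fe}$ (the second piece coming from $\frac{v^\perp}{\eps}\cdot[\nabla_x\overline{\fe}]^\perp=\frac{v}{\eps}\cdot\nabla_x\overline{\fe}$, up to sign). The $\frac{v}{\eps}\cdot\nabla_x$ contribution then cancels the analogous streaming term $\frac{v}{\eps}\cdot(\nabla_x\fe)=\frac{v}{\eps}\cdot\nabla_x\overline{\fe}$ already present, and what remains is exactly
\[
\partial_t\overline{\fe}+E_\eps^\perp(t,x-\eps v^\perp)\cdot\nabla_x\overline{\fe}+\Big(\frac{v^\perp}{\eps^2}+\frac{E_\eps(t,x-\eps v^\perp)}{\eps}\Big)\cdot\nabla_v\overline{\fe}=0,
\]
up to getting all the signs consistent. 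Finally, for the second identity I would integrate this equation over $v\in\RR$: the term $\partial_t\overline{\fe}$ integrates to $\partial_t\overline{\rho}_\eps$; the term $E_\eps^\perp(t,x-\eps v^\perp)\cdot\nabla_x\overline{\fe}$ equals $\nabla_x\cdot\big(E_\eps^\perp(t,x-\eps v^\perp)\overline{\fe}\big)$ since $\nabla_x\cdot E_\eps^\perp=0$ (as $E_\eps^\perp$ is divergence-free in $x$, being a rotated gradient), and then integration in $v$ commutes with $\nabla_x$; and the term $\big(\frac{v^\perp}{\eps^2}+\frac{E_\eps(t,x-\eps v^\perp)}{\eps}\big)\cdot\nabla_v\overline{\fe}$ is a perfect $v$-divergence because $\nabla_v\cdot\big(\frac{v^\perp}{\eps^2}+\frac{E_\eps(t,x-\eps v^\perp)}{\eps}\big)=0$ (the first summand because $v\mapsto v^\perp$ is divergence-free, the second because $E_\eps(t,x-\eps v^\perp)$ depends on $v$ only through $x-\eps v^\perp$ and $\nabla_v\cdot[g(x-\eps v^\perp)]=-\eps\,(\nabla_x\cdot g)^\perp\cdot(\cdots)$ vanishes identically, which again reduces to $\diver_x E_\eps=\rho_\eps$... here one must instead note it integrates to zero by the compact support in $v$). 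Hence this last term integrates to zero using that $\overline{\fe}(t,x,\cdot)$ is compactly supported, giving $\partial_t\overline{\rho}_\eps+\nabla_x\cdot\big(\int_{\RR}E_\eps^\perp(t,x-\eps v^\perp)\overline{\fe}\,dv\big)=0$.

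The main obstacle is purely clerical: tracking the signs in the chain-rule identity $\nabla_v\overline{\fe}=(\nabla_v\fe)-\eps[(\nabla_x\fe)]^\perp$ and in the repeated use of $a\cdot b^\perp=-a^\perp\cdot b$, and making sure the $\frac{v}{\eps}\cdot\nabla_x$ terms genuinely cancel rather than doubling. I do not anticipate any analytic difficulty, since the $C^1$ regularity of $f_\eps$ (guaranteed by the standing hypothesis of this paragraph) makes every manipulation legitimate pointwise, and the compact support of $f_\eps$ in $(x,v)$ justifies differentiating under the integral sign and discarding boundary terms when integrating in $v$.
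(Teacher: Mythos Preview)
Your approach is exactly the paper's: compute the derivatives of $\overline{\fe}$ via the chain rule, substitute into \eqref{syst:VP} evaluated at $(t,x-\eps v^\perp,v)$, watch the $\tfrac{v}{\eps}\cdot\nabla_x$ terms cancel, and then integrate in $v$. The first equation is fine modulo the sign bookkeeping you already flagged.

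There is, however, one genuine slip in the integration step. After writing
\[
\left(\frac{v^\perp}{\eps^2}+\frac{E_\eps(t,x-\eps v^\perp)}{\eps}\right)\cdot\nabla_v\overline{\fe}
=\nabla_v\cdot\Big[\big(\cdots\big)\overline{\fe}\Big]
-\Big(\nabla_v\cdot\big(\cdots\big)\Big)\overline{\fe},
\]
compact support in $v$ kills only the first piece; the second does not vanish ``by compact support'' alone. Your computation of that $v$-divergence is also off: differentiating $E_\eps(t,x-\eps v^\perp)$ in $v$ brings in the \emph{curl} of $E_\eps$, not its divergence. Concretely, with $v^\perp=(-v_2,v_1)$ one finds
\[
\nabla_v\cdot\big[E_\eps(t,x-\eps v^\perp)\big]
=-\eps\,(\partial_2 E_{\eps,1}-\partial_1 E_{\eps,2})(t,x-\eps v^\perp)
=-\eps\,\curl E_\eps,
\]
which vanishes because $E_\eps$ is a gradient. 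This is precisely the mechanism the paper invokes (it tracks the two $\curl E_\eps$ contributions coming from the $\nabla_x$ and $\nabla_v$ terms and then sets $\curl E_\eps=0$). Once you replace ``reduces to $\diver_x E_\eps=\rho_\eps$'' by ``reduces to $\curl_x E_\eps=0$'', your argument is complete and coincides with the paper's.
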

 
 \begin{proof} 
We compute
\begin{equation*}
\begin{split}
\partial_t \overline{\fe}(t,x,v)&=\partial_t \fe(t,x-\eps v^\perp,v),\quad \nabla_x \overline{\fe}(t,x,v)=\nabla_x \fe(t,x-\eps v^\perp,v),\\
\nabla_v \overline{\fe}(t,x,v)&=(\eps \nabla_x^\perp +\nabla_v)\fe(t,x-\eps v^\perp,v),
\end{split}
\end{equation*}
then
\begin{equation*}
\begin{split}
&\left(\frac{v^\perp}{\eps^2}+\frac{E_\eps(t,x-\eps v^{\perp})}{\eps}\right)\cdot \nabla_v \fe(t,x-\eps v^\perp,v)\\&
=\left(\frac{v^\perp}{\eps^2}+\frac{E_\eps(t,x-\eps v^\perp)}{\eps}\right)\cdot \left(\nabla_v-\eps \nabla_x^\perp\right) \overline{\fe}(t,x,v)\\
&=-\frac{v}{\eps}\cdot \nabla_x \overline{\fe}(t,x,v)+E_\eps^\perp(t,x-\eps v^\perp)\cdot  \nabla_x \overline{\fe}(t,x,v)
+\left(\frac{v^\perp}{\eps^2}+\frac{E_\eps(t,x-\eps v^\perp)}{\eps}\right)\cdot \nabla_v \overline{\fe}(t,x,v).
\end{split}
\end{equation*}
Therefore $\overline{\fe}$ satisfies the first equation in Proposition \ref{prop:rho-decale}. Next, we integrate with respect to $v$ and we observe that
\begin{equation*}
\begin{split}
\int_{\RR} v^\perp \cdot \nabla_v \overline{\fe}\,dv&=-\int_{\RR}\nabla_v \cdot v^\perp\, \overline{\fe}\,dv=0,
\\
 \int_{\RR} E_\eps^\perp(x-\eps v^\perp) \cdot \nabla_v \overline{\fe}\,dv&=-\int_{\RR}\nabla_v \cdot [ E_\eps(x-\eps v^\perp)]\, \overline{\fe}\,dv
=\eps \int_{\RR} \text{curl} (E_\eps)(x-\eps v^\perp)\, \overline{\fe}\,dv,
\end{split}
\end{equation*}
where $\text{curl} (G)=\partial_2G_1-\partial_1 G_2.$ Similarly,
\begin{equation*}
\begin{split}
\int_{\RR} E_\eps^\perp(x-\eps v^\perp) \cdot \nabla_x \overline{\fe}\,dv&=
\nabla_x\cdot \left(\int_{\RR} E_\eps^\perp(x-\eps v^\perp)\overline{\fe}\,dv\right)-\int_{\RR} \text{curl}(E_\eps)(x-\eps v^\perp)\,\overline{\fe}\,dv.
\end{split}
\end{equation*}
Now, since $E_\eps$ is a gradient, we have $\text{curl}(E_\eps)=0$, hence the second equation of Proposition \ref{prop:rho-decale} follows.

\end{proof}

We now establish Theorem \ref{thm:main}. The same arguments as the ones of Subsection \ref{subsec:limit} yield a subsequence such that $(\rho_{\eps_n})_{n\in \N}$ 
converges to $\rho$ in $C(\R_+, \mathcal{M}^+(\RR)-w^\ast)$ as $n\to +\infty$. 
Let $\Phi \in C_c^\infty(\R_+\times\R^2)$. Using Proposition \ref{prop:rho-decale} and the fact that the Jacobian of $x\mapsto x+\eps v^\perp$ is one for any fixed $v$,  
we obtain
\begin{equation*}
\begin{split}
&\frac{d}{dt} \int_{\RR}\overline{\rho}_{\eps_n}(t,x)\Phi(t,x)\,dx\\
&=\int_{\RR}\overline{\rho}_{\eps_n}(t,x)\dt \Phi(t,x)\,dx
+\int_{\RR}\nabla \Phi(t,x)\cdot\left(\int_{\RR} E_{\eps_n}^\perp(t,x-\eps_n v^\perp) \overline{f}_{\eps_n}(t,x,v) \,dv\right) \,dx\\
&=\int_{\RR}\left(\int_{\RR}{f}_{\eps_n}(t,x-\eps_n v^{\perp},v) 
\left[\dt \Phi(t,x)+\nabla \Phi(t,x)\cdot E_{\eps_n}^\perp(t,x-\eps_n v^\perp)\right]\,dx\right)\,dv\\
&=\int_{\RR}\left(\int_{\RR}{f}_{\eps_n}(t,x,v) \left[\dt \Phi(t,x+\eps_n v^\perp) +\nabla 
\Phi(t,x+\eps_n v^\perp )\cdot E_{\eps_n}^\perp(t,x)\right]\,dx\right)\,dv.
\end{split}
\end{equation*}
Writing finally
\begin{equation*}
\begin{split}
&\int_{\RR}\left(\int_{\RR}f_{\eps_n}(t,x,v) \left[\dt \Phi(t,x+\eps_n v^\perp) +\nabla 
\Phi(t,x+\eps_n v^\perp )\cdot E_{\eps_n}^\perp(t,x)\right]\,dx\right)\,dv
\\&=\int_{\RR}\rho_{\eps_n}(t,x)\left[\partial_t \Phi(t,x)+ \nabla \Phi(t,x)\cdot E_{\eps_n}^\perp(t,x)\right]\,dx
\\&+\iint_{\RR \times \RR}f_{\eps_n}(t,x,v) \left[ \dt \Phi(t,x+\eps_n v^\perp)-\dt \Phi(t,x)\right]\,dx\,dv\\
&+\iint_{\RR \times \RR}f_{\eps_n}(t,x,v) \left[ \nabla \Phi(t,x+\eps_n v^\perp )-\nabla \Phi(t,x)\right]
\cdot E_{\eps_n}^\perp(t,x)\,dx\,dv,
\end{split}
\end{equation*}
we conclude as in the previous section.

\section{Proof of Theorem \ref{thm:main-2}}

In this section we adapt the proof of Theorem \ref{thm:main} to the case of initial data satisfying the assumptions of Theorem \ref{thm:main-2}. We have
\begin{equation}
\sup_{t\in \R_+}\sup_{\eps>0} \|\fe(t)\|_{L^\infty}<\infty,
\end{equation}hence it follows from Propositions \ref{prop:uni-2} and \ref{prop:uni-3} that
\begin{equation}\label{ineq:ajout}
\sup_{t\in \R_+}\sup_{\eps>0} \|\rho_\eps(t)\|_{L^2(\RR)}<\infty,\quad
\sup_{t\in \R_+}\sup_{\eps>0} \|E_\eps(t)-\tilde{E}_\eps\|_{H^1(\RR)}<\infty.\end{equation}Exactly as in Subsection \ref{subsec:limit}, the 
family $(\rho_\eps)_{\eps>0}$ is relatively compact in
$C(\R_+,\mathcal{M}^+(\RR)-w^\ast)$. Moreover, $(\rho_\eps(t))_{\eps>0}$ is weakly relatively compact in $L^2(\RR)$ for all $t\geq 0$. It follows that for some subsequence $\eps_n \to 0$,
$(\rho_{\eps_n})_{n\in \N}$ converges to some $\rho$ in $C(\R_+,L^2(\RR)-w)$ and in $C(\R_+,\mathcal{M}^+(\RR)-w^\ast)$ as $n\to +\infty$. 
Let $E=( x/|x|^2)\ast \rho$, so that $E$ belongs to $L^\infty_{\loc}(\R_+,L^1+L^2(\RR))$.  Decomposing $$ \frac{x}{|x|^2}=\frac{x}{|x|^2}\chi_\delta+ 
\frac{x}{|x|^2}(1-\chi_\delta),$$ 
with $\chi_\delta$ a cut-off function supported in $B(0,2\delta)$ with value $1$ on $ B(0,\delta)$, we see immediately that 
$$ \frac{x}{|x|^2}(1-\chi_\delta)\ast \rho_{\eps_n}\to  \frac{x}{|x|^2}(1-\chi_\delta)\ast \rho \quad \text{locally uniformly on $\R_+\times \RR$ as $n\to +\infty$},$$ while 
$$\left\|\left( \frac{x}{|x|^2}\chi_\delta\right) \ast \rho_{\eps_n}(t)\right\|_{L^2}\leq C\delta \|\rho_{\eps_n}(t)\|_{L^2}\leq C\delta.$$ So we conclude that 
$(E_{\eps_n})_{n\in \N}$ converges to $E$ in 
$C(\R_+, L^2_{\loc}(\RR))$. This implies that
$(E_{\eps_n}^\perp\rho_{\eps_n})_{n\in \N}$ converges to $E^\perp \rho$
in the sense of distributions on $\R_+\times \RR$. Therefore, all terms pass to the limit in Proposition \ref{prop:weak-formulation-1}, and $\rho$ satisfies 
\eqref{syst:Euler} in the sense of distributions. This concludes the proof.

\medskip

\noindent \textbf{Acknowledgments} {The author  is partially supported by the French ANR projects SchEq ANR-12-JS-0005-01 and GEODISP ANR-12-BS01-0015-01.}

\end{document}